\newcommand{\re}{\mathbb{R}}
\newcommand{\D}{\mathrm{D}}
\newcommand{\R}{\mathrm{R}}
\long\def\symbolfootnote[#1]#2{\begingroup%
\def\thefootnote{\fnsymbol{footnote}}\footnote[#1]{#2}\endgroup}
\newtheorem{thm}{Theorem}[section]
\newtheorem{lem}[thm]{Lemma}
\newtheorem{cor}[thm]{Corollary}
\theoremstyle{definition}
\theoremstyle{remark}
\newtheorem{rem}[thm]{Remark}
\title{On local solvability of nonlinear elliptic partial differential systems of principle type: the second order }
\author{Yifei Pan}
\begin{document}

\maketitle
\begin{center}
\end{center}
\begin{abstract}
We prove results on solvability of nonlinear elliptic partial differential systems of principle type of second order. They are consequences of existence of non-radial solutions for nonlinear partial differential systems of Poisson type. As applications to geometry, we prove the exsitence of local harmonic maps with given tangent  plane at a point between any Riemannan manifolds. More generally geometric objects defined by Beltrami-Laplace always exist locally.

\end{abstract}

\large

\section{Introduction}\label{sec0}\symbolfootnote[0]{MSC 2010: 35G20 (Primary); 32G05, 30G20 (Secondary)}

In this paper we consider solvability problem for nonlinear partial differential systems of principle type of second order in $\re^n$. To this end,  we first obtain existence of non-radial classical solutions of nonlinear systems of Poisson type under rather general conditions. It implies in particular the following general results on solvability of nonlinear elliptic partial differential systems of second order.
\subsection{General elliptic systems of second order}
In this paper we denote $L$ as an elliptic operator; namely,
$$L u=a^{ij}(x) D_{ij} u$$
where we assume that $a^{ij}(x)\in C^{1,\alpha}$ and satisfies 
$$a^{ij}(x)\xi_i\xi_j\geq \lambda|\xi|^2 $$
for some positive constant $\lambda$ and for all $\xi\in\re^n$.
Our first result is that the elliptic system is always solvable with any initial value at a point.

\bigskip
\noindent
{\bf Theorem A.} {\sl Let $a(x, p, q)=(a_1(x, p, q), ..., a_N(x, p, q))$ be of class $C_{loc}^{k+\alpha}$ ($1\leq k\leq\infty, 0<\alpha<1)$,  where $x\in\re^n, p\in\re^N$, and $q\in \re^n\otimes\re^N$.
Then, for any given $c_0\in \re^N, c_1\in \re^n\otimes\re^N $, the following system: $u(x)=(u_1(x), ..., u_N(x)): \{|x|\leq R\}\to \re^N$,
\begin{eqnarray}
L u(x)&=&a(x, u(x), \nabla u(x))\nonumber\\
u(0)&=&c_0\nonumber\\
\nabla u(0)&=&c_1\nonumber
\end{eqnarray}
has infinitely many solutions of $C^{k+2+\alpha}(\{|x|\leq R\})$ for sufficiently small values of $R$. In particular, all solutions are not radially symmetric and satisfy $\nabla^2 u(0)\not=0$.}

\bigskip
The second result is to show the solvability of fully nonlinear systems.

\bigskip
\noindent
{\bf Theorem B.} {\sl
Let $a(x, p, q, r)=(a_1(x, p, q, r), ..., a_N(x, p, q, r))$ be of class $C_{loc}^{2}$ ($ 0<\alpha<1)$,  where $x\in\re^n, p\in\re^N, q\in \re^n\otimes\re^N$, and $r\in \mathrm{Sym}(n)\otimes\re^N$.
Assume that
$a(0)=\nabla_r a(0)=\nabla_r^2 a(0)=0.$
Then the following system: $u(x)=(u_1(x), ..., u_N(x)): \{|x|\leq R\}\to \re^N$,
\begin{eqnarray}
L u(x)&=&a(x, u(x), \nabla u(x), \nabla^2 u(x))\nonumber
\end{eqnarray}
has infinitely many solutions of $C^{2+\alpha}(\{|x|\leq R\})$ of vanishing order two at the origin for sufficiently small values of $R$. Moreover, all these solutions are not radially symmetric.
}

\bigskip
The third result is to show the existence of semi-global solutions if the system is autonomous.

\bigskip
\noindent
{\bf Theorem C.} {\sl
 Let $a(p, q, r)=(a_1(p, q, r), ..., a_N(p, q, r))$ be of class $C_{loc}^{2}$ ($0<\alpha<1)$,  where $p\in\re^N, q\in \re^n\otimes\re^N$, and $r\in \mathrm{Sym}(n)\otimes\re^N$.
Assume
$a(0)=
\nabla a(0)=0.$
Then the following system: $u(x)=(u_1(x), ..., u_N(x)): \{|x|\leq R\}\to \re^N$,
\begin{eqnarray}
L u(x)&=&a(u(x), \nabla u(x), \nabla^2 u(x))\nonumber
\end{eqnarray}
has infinitely many solutions in $\{|x|\leq R| x\in\re^n\}$ of $C^{2+\alpha}$ with vanishing order two at the origin for any given value of $R$.
Consequently, all these solutions are not radially symmetric.
}

\bigskip
As applications to differential geometry, we prove local existence of harmonic maps.

\bigskip
\noindent
{\bf Theorem D.} {\sl
Let $M, N$ be two Riemanian manifolds. Let $p\in M, q\in N$ be any points. Then there is a local harmonic map between $M$ and $N$ near $p$ such that its tangent space at $q$ can be arbitrarily given.}

\bigskip
We point out that the existence of nontrival solutions of elliptic system implies that geometric equations or systems with Laplace-Beltrami on a Riemannian manifold are always locally solvable. For example we use Theorem A to give a simple proof of the existence of harmonic coordinates at a point of a Riemannian manifold,
which is of course well-known. Given a Riemannian manifold $(M,g)$, we can think of $M$ being open subset of $\re^n$ and $p=0$. Then the metric is given by
$$g=g_{ij}=g(\partial_i,\partial_j)=g(\frac{\partial}{\partial y^i},\frac{\partial}{\partial y^j})$$
in the standard Cartesian coordinates $(y^1, ...,y^n)$. We can find a coordinate transformation $y\to x$ by solving the system
\begin{eqnarray}
\Delta x^k &=&\frac{1}{\sqrt{\det g_{ij}}}\partial_i(\sqrt{\det g_{ij}}g^{ij}\partial_j x^k)=0\nonumber\\
x^k(0) &=&0\nonumber\\
\nabla x^k(0)&=& e_k.\nonumber
\end{eqnarray}
Evidently, $x=(x^1,...,x^n)$ becomes so-called harmonic coordinates at $p=0$.

Let us take a look at the equation of prescribed mean curvature. Any local solution by Theorem A of the following
$$(1+|D u|^2)\Delta u-D_iuD_j uD_{ij} u=nH(x)(1+|D u|)^2)^{3/2}$$
gives a graph $(x,u(x))$ in $\re^{n+1}$ whose mean curvature is $H(x)$ at $(x,u(x))$.

All these results are consequences of systems of Poisson type that we present specifically below.

\subsection{Systems of Poisson type}

\begin{thm} Let $a(x, p, q, r)=(a_1(x, p, q, r), ..., a_N(x, p, q, r))$ be of class $C_{loc}^{2}$ ($ 0<\alpha<1)$,  where $x\in\re^n, p\in\re^N, q\in \re^n\otimes\re^N$, and $r\in \mathrm{Sym}(n)\otimes\re^N$.
Assume that
\begin{eqnarray}
a(0)&=& 0,\\
\nabla_r a(0)&=&0,\\
\nabla_r^2 a(0)&=&0.
\end{eqnarray}
Then the following system: $u(x)=(u_1(x), ..., u_N(x)): \{|x|\leq R\}\to \re^N$,
\begin{eqnarray}
\Delta u(x)&=&a(x, u(x), \nabla u(x), \nabla^2 u(x))
\end{eqnarray}
has infinitely many solutions of $C^{2+\alpha}(\{|x|\leq R\})$ of vanishing order two at the origin for sufficiently small values of $R$. Moreover, all these solutions are not radially symmetric.
\end{thm}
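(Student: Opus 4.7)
The plan is to seek solutions of the form $u_\epsilon = \epsilon P + v$, where $P$ is a fixed non-zero harmonic polynomial of degree two in $\re^n$ (e.g.\ $P(x) = x_1^2 - x_2^2$, with the remaining components of the target $\re^N$ zero), $\epsilon > 0$ is a small parameter, and $v \in C^{2+\alpha}(\overline{B_R})$ is a perturbation with $v(0) = \nabla v(0) = 0$. Because $\Delta P = 0$, the system reduces to
$$\Delta v = a\bigl(x,\,\epsilon P + v,\,\epsilon \nabla P + \nabla v,\,\epsilon \nabla^2 P + \nabla^2 v\bigr) =: F_\epsilon(v),$$
and I will produce $v$ via a contraction argument on the closed set
$$X_{R,\eta} := \{v \in C^{2+\alpha}(\overline{B_R}) : v(0) = \nabla v(0) = 0,\ \|v\|_{C^{2+\alpha}} \le \eta\}.$$

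The first step is to extract from $a(0) = \nabla_r a(0) = \nabla_r^2 a(0) = 0$ and $a \in C^2_{loc}$ the quantitative Taylor-type bounds
$$|a(x,p,q,r)| \le C\bigl[(|x|+|p|+|q|)(1+|r|) + (|x|+|p|+|q|+|r|)^3\bigr], \quad |\nabla_r a(x,p,q,r)| \le C(|x|+|p|+|q|+|r|),$$
valid in a neighbourhood of the origin. For $v \in X_{R,\eta}$ on $B_R$, Taylor's theorem at the origin gives $|v| \le \tfrac{1}{2}\eta R^2$, $|\nabla v| \le \eta R$, and $|\nabla^2 v| \le \eta$; combined with $|P| \lesssim R^2$, $|\nabla P| \lesssim R$, $|\nabla^2 P| \lesssim 1$, substitution into $F_\epsilon(v)$ yields
$$\|F_\epsilon(v)\|_{C^\alpha(B_R)} \lesssim R^{1-\alpha} + (\epsilon+\eta)^2.$$
All three vanishing conditions on $a$ are essential: $\nabla_r a(0) = 0$ is what makes the Lipschitz constant of $F_\epsilon$ in $\nabla^2 v$ small, while $\nabla_r^2 a(0) = 0$ is what kills an otherwise surviving $O((\epsilon+\eta)^2)$ contribution with non-small coefficient and allows the self-mapping to close at the sharp scale $\eta = O(\epsilon^2)$.

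Next, let $T_\epsilon$ send $v \in X_{R,\eta}$ to $w - [w(0) + \nabla w(0)\cdot x]$, where $w$ solves the Dirichlet problem $\Delta w = F_\epsilon(v)$ on $B_R$ with $w|_{\partial B_R} = 0$; the subtracted linear polynomial is harmonic, so $\Delta T_\epsilon(v) = F_\epsilon(v)$ while $T_\epsilon(v)(0) = \nabla T_\epsilon(v)(0) = 0$. Global Schauder theory gives $\|T_\epsilon(v)\|_{C^{2+\alpha}} \le C\|F_\epsilon(v)\|_{C^\alpha}$ uniformly in $R \le 1$. Taking $\eta := C_1 \epsilon^2$ with $C_1$ sufficiently large and $R$ so small that $R^{1-\alpha} \le \epsilon^2$ forces $T_\epsilon(X_{R,\eta}) \subset X_{R,\eta}$, and a parallel Lipschitz estimate --- using $|\nabla_r a| \lesssim R + \epsilon + \eta$ on the relevant arguments together with the Taylor gains $|v_1 - v_2| \lesssim R^2\|v_1 - v_2\|_{C^{2+\alpha}}$ and $|\nabla(v_1 - v_2)| \lesssim R\|v_1 - v_2\|_{C^{2+\alpha}}$ --- yields
$$\|T_\epsilon(v_1) - T_\epsilon(v_2)\|_{C^{2+\alpha}} \le C(R + \epsilon + \eta)\|v_1 - v_2\|_{C^{2+\alpha}},$$
a contraction for small $\epsilon$. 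Banach's fixed point theorem then furnishes a unique $v_\epsilon \in X_{R,\eta}$ with $T_\epsilon(v_\epsilon) = v_\epsilon$, so $u_\epsilon := \epsilon P + v_\epsilon$ is a classical $C^{2+\alpha}$ solution vanishing to order two at the origin.

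For the remaining claims, $\nabla^2 u_\epsilon(0) = \epsilon\,\nabla^2 P(0) + \nabla^2 v_\epsilon(0)$ with $|\nabla^2 v_\epsilon(0)| \le \eta = O(\epsilon^2)$, so for small $\epsilon$ the Hessian is dominated by the non-zero traceless matrix $\epsilon\,\nabla^2 P(0)$ --- in particular it is non-zero (confirming vanishing order exactly two) and not a scalar multiple of the identity (confirming non-radiality, since the Hessian at the origin of a radially symmetric function must be a multiple of $I$). The map $\epsilon \mapsto u_\epsilon$ is continuous and non-constant on its admissible interval (indeed $\|u_{\epsilon_1} - u_{\epsilon_2}\|_{C^{2+\alpha}} \ge |\epsilon_1 - \epsilon_2|\|P\|_{C^{2+\alpha}} - 2\eta > 0$ for suitably spread $\epsilon_i$), so its image is uncountable, giving infinitely many distinct solutions. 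The main technical obstacle will be engineering the sharp bound on $F_\epsilon(v)$ above: a crude estimate $|a| \lesssim |(x,p,q,r)|$ would only yield $\|F_\epsilon(v)\|_{C^\alpha} = O(\epsilon)$, forcing $\eta \sim \epsilon$, and this in turn would destroy both the distinctness of the family $\{u_\epsilon\}$ and the guarantee that $\epsilon\,\nabla^2 P(0)$ dominates $\nabla^2 v_\epsilon(0)$ at the origin.
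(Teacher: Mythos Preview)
Your strategy is essentially the paper's --- a contraction argument in a space of functions vanishing to first order at the origin, with a degree-two harmonic polynomial inserted to force vanishing order exactly two and non-radiality --- but two technical points are handled loosely and, as written, do not quite close. First, the assertion that the Dirichlet Schauder estimate $\|T_\epsilon(v)\|_{C^{2+\alpha}(B_R)} \le C\|F_\epsilon(v)\|_{C^\alpha(B_R)}$ holds with $C$ independent of $R\le 1$ is not innocent: a scaling computation shows that with the \emph{standard} H\"older norms one picks up $[\nabla^2 w]_{\alpha}\le C\bigl(R^{-\alpha}|f|_0+[f]_\alpha\bigr)$, so the constant blows up like $R^{-\alpha}$. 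The paper addresses exactly this by working with the Newtonian potential and the weighted norm $\|f\|=|f|+(2R)^\alpha H_\alpha[f]$, proving (Theorem~3.4) that $\|\mathcal{N}(f)\|^{(2)}\le C(n,\alpha)\|f\|$ with $C$ genuinely independent of $R$; without an analogous device your self-map and contraction bounds acquire hidden $R^{-\alpha}$ factors that can defeat the closing step, especially for $\alpha\ge\tfrac12$.

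Second, your mechanism for producing infinitely many non-radial solutions on a \emph{single} ball is fragile. You take $\eta=C_1\epsilon^2$ and then force $R^{1-\alpha}\le\epsilon^2$, so $R$ shrinks with $\epsilon$; distinct $\epsilon$'s therefore yield solutions on different domains, and your distinctness inequality $\|u_{\epsilon_1}-u_{\epsilon_2}\|\ge|\epsilon_1-\epsilon_2|\|P\|-2\eta$ only separates a discrete set of parameters. The paper sidesteps both difficulties by a small but decisive trick: in defining the map $\Theta$ it subtracts not only the value and gradient of the Newtonian potential at $0$ but also the \emph{off-diagonal} second-order Taylor terms $\tfrac12\sum_{k\neq l}\partial_k\partial_l\omega(f)(0)\,x_kx_l$ (which are harmonic). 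This forces $\partial_k\partial_l\Theta(f)(0)=0$ for $k\neq l$, so the fixed point $u=h+\Theta(u)$ satisfies $\partial_k\partial_l u(0)=\partial_k\partial_l h(0)$ \emph{exactly} for $k\neq l$. Different harmonic quadratics $h$ (all with $\|h\|^{(2)}\le\gamma_0/2$) then give solutions on the same $B_R$ with different off-diagonal Hessians at $0$ --- hence distinct, and non-radial whenever that Hessian is not a multiple of the identity --- with no perturbative comparison needed.
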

Here we make comments on notations used above. First the notation $\nabla_r$ means that derivatives are taken with respect to the variables of $r$, and $\mathrm{Sym}(n)$ denotes the set of $n\times n$ symmetric matrices. A solution $u$ is said of vanishing order two if $u(0)=0, \nabla u(0)=0$, but $\nabla^2 u(0)\not=0$. The vanishing order two ensures that the solutions obtained are non-trivial; namely they are neither constant nor linear ones.
More importantly, the vanishing order two ensures that solutions of such are not radial, and this shows that the solutions could not come from solving an ODE.

If $a$ is independent of $r$, then the following existence theorem can be regarded as a variant of ODE with initial values. We point out, though,  that there is no uniqueness result as in ODE; instead there are infinitely many solutions by construction.
\begin{thm} Let $a(x, p, q)=(a_1(x, p, q), ..., a_N(x, p, q))$ be of class $C_{loc}^{k+\alpha}$ ($1\leq k\leq\infty, 0<\alpha<1)$,  where $x\in\re^n, p\in\re^N$, and $q\in \re^n\otimes\re^N$.
Then, for any given $c_0\in \re^N, c_1\in \re^n\otimes\re^N $, the following system: $u(x)=(u_1(x), ..., u_N(x)): \{|x|\leq R\}\to \re^N$,
\begin{eqnarray}
\Delta u(x)&=&a(x, u(x), \nabla u(x))\\
u(0)&=&c_0\\
\nabla u(0)&=&c_1
\end{eqnarray}
has infinitely many solutions of $C^{k+2+\alpha}(\{|x|\leq R\})$ for sufficiently small values of $R$. In particular, all hese solutions are not radially symmetric.
\end{thm}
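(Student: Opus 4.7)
The plan is to reduce the initial-value system to Theorem~1.1 by subtracting off an explicit quadratic polynomial that absorbs both the prescribed data $(c_0,c_1)$ and the value $a(0,c_0,c_1)$. Setting $b := a(0,c_0,c_1)$ and
$$ w(x) \;=\; c_0 + c_1\cdot x + \frac{|x|^2}{2n}\,b, $$
one has $w(0)=c_0$, $\nabla w(0)=c_1$, and $\Delta w = b$. Introducing the new unknown $v := u-w$ converts the system into
$$ \Delta v \;=\; a\bigl(x,\,w+v,\,\nabla w+\nabla v\bigr) - b \;=:\; \tilde a(x,v,\nabla v), $$
with $\tilde a(0,0,0)=0$. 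Since $\tilde a$ is independent of the Hessian variable $r$, the hypotheses $\nabla_r\tilde a(0)=0$ and $\nabla_r^2\tilde a(0)=0$ of Theorem~1.1 hold trivially, and $\tilde a(0)=0$ holds by construction.

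Applying Theorem~1.1 to the $\tilde a$-equation will then supply, for sufficiently small $R$, infinitely many $C^{2+\alpha}$ solutions $v$ on $\{|x|\le R\}$ of vanishing order two at the origin, and $u := w+v$ will be the desired family for the original initial-value problem. Infinitude and the correct initial data follow immediately from the corresponding properties of $v$. For non-radiality: if $c_1\neq 0$ then $\nabla u(0)=c_1$ is nonzero, whereas a smooth radial function has vanishing gradient at the origin, so no $u$ in the family is radial; if $c_1=0$, then $w$ is radial, so $u$ is radial iff $v$ is, and Theorem~1.1 guarantees $v$ is not radial.

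To upgrade the regularity from $C^{2+\alpha}$ to $C^{k+2+\alpha}$, I would invoke a standard Schauder bootstrap: if $v\in C^{m+\alpha}$ with $m\le k+1$, the chain rule applied to $a\in C^{k+\alpha}_{\mathrm{loc}}$ shows $\tilde a(\cdot,v,\nabla v)\in C^{\min(k,m)+\alpha}$, hence by interior Schauder estimates for $\Delta$, $v\in C^{\min(k,m)+2+\alpha}$. Iterating one order at a time raises the regularity until the ceiling $C^{k+2+\alpha}$ is reached; the process terminates because each round gains exactly one derivative.

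The main obstacle to this plan is matching the regularity hypotheses of Theorem~1.1 in the borderline case $k=1$: Theorem~1.1 is stated with $a\in C^{2}_{\mathrm{loc}}$, whereas here only $\tilde a\in C^{1+\alpha}_{\mathrm{loc}}$ is available when $k=1$. Because $\tilde a$ has no dependence on $r$, the parts of the Theorem~1.1 construction devoted to controlling the full $r$-dependence are vacuous for our reduced system, and the remaining argument should go through under the weaker $C^{1+\alpha}$ hypothesis; verifying this is the one point where the reduction cannot be a strict black-box invocation of Theorem~1.1 but requires inspecting its proof specialized to $r$-independent nonlinearities.
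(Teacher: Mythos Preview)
Your reduction is correct, but it differs from the paper's in two instructive ways. The paper subtracts only the affine part $c_0+c_1\cdot x$ and does \emph{not} arrange $\tilde a(0)=0$; instead it goes back to the fixed-point estimate (Theorem~4.4), observes that for $r$-independent nonlinearities the constants $C[R,\gamma]$, $H_\alpha^C[R,\gamma]$, $H_1^C[R,\gamma]$ all vanish, so that $\delta(R,\gamma)=\varepsilon(R,\gamma)\to 0$ and $\eta(R,\gamma)=C|a(0)|+\epsilon(R,\gamma)$, and then simply chooses $\gamma_0$ \emph{large} enough that $C|a(0)|<\gamma_0/4$ before shrinking $R$. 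Your device of adding the quadratic $\frac{|x|^2}{2n}\,b$ to force $\tilde a(0)=0$ is a clean alternative that lets you invoke Theorem~1.1 essentially as a black box, at the cost of the extra bookkeeping in the non-radiality argument (your case split on $c_1$), which the paper avoids since its $\tilde a$-solution already has vanishing order two with non-diagonal Hessian.

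On the regularity obstacle you flag: the paper handles it exactly as you anticipate. It remarks in the proof (and again when proving Corollary~1.4) that the $C^2$ hypothesis on $a$ in Theorem~1.1 is used only to control the $r$-Lipschitz constants $H_1^A,H_1^B,H_1^C$, which are irrelevant when $a$ is $r$-independent; hence $C^{1+\alpha}$ suffices. Your bootstrapping argument for the higher regularity is also what the paper has in mind when it says at the start of Section~5 that ``it suffices to work with $a$ of class $C^2$ or $C^{1+\alpha}$ by the regularity theory of Laplace.'' So both routes arrive at the same place, with yours packaging the reduction more tightly and the paper's staying closer to the raw estimates.
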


If $a$ is independent of $x$, that is, the system is so-called autonomous, then we can solve non-trivial semi-global solutions, i.e., solutions that are defined in any given ball $\{|x|\leq R\}$.
\begin{thm} Let $a(p, q, r)=(a_1(p, q, r), ..., a_N(p, q, r))$ be of class $C_{loc}^{2}$ ($0<\alpha<1)$,  where $p\in\re^N, q\in \re^n\otimes\re^N$, and $r\in \mathrm{Sym}(n)\otimes\re^N$.
Assume
\begin{eqnarray}
a(0)&=& 0,\\
\nabla a(0)&=&0.
\end{eqnarray}
Then the following system: $u(x)=(u_1(x), ..., u_N(x)): \{|x|\leq R\}\to \re^N$,
\begin{eqnarray}
\Delta u(x)&=&a(u(x), \nabla u(x), \nabla^2 u(x))
\end{eqnarray}
has infinitely many solutions in $\{|x|\leq R| x\in\re^n\}$ of $C^{2+\alpha}$ with vanishing order two at the origin for any given value of $R$.
Consequently, all these solutions are not radially symmetric.
\end{thm}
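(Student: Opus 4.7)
The plan is to construct solutions of the form $u(x) = \mu P(x) + w(x)$ on $\{|x|\le R\}$, where $P$ is a fixed non-radial harmonic polynomial of degree two (for instance $P(x) = x_1^2 - x_2^2$, so $\Delta P = 0$, $P(0) = 0$, $\nabla P(0) = 0$, and $\nabla^2 P(0)$ is not a scalar multiple of the identity), $\mu > 0$ is a small parameter to be chosen in terms of $R$, and $w$ is a correction obtained by a contraction-mapping argument. The reason the semi-global statement (any $R$) is reachable is that $a$ is autonomous: there is no $x$-dependence to grow with $R$, so smallness of $\mu$ alone drives the perturbation and can be adjusted freely once $R$ has been fixed.

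After substituting the ansatz, the equation for $w$ reads $\Delta w = F(w) := a(\mu P + w,\,\mu\nabla P + \nabla w,\,\mu\nabla^2 P + \nabla^2 w)$. The hypotheses $a(0) = 0$, $\nabla a(0) = 0$ together with $a\in C^2_{loc}$ yield, by Taylor's formula, $|a(p,q,r)|\le C(|p|+|q|+|r|)^2$ and $|\nabla a(p,q,r)|\le C(|p|+|q|+|r|)$ on bounded sets, and the standard Hölder-composition estimates then give
$$\|F(w)\|_{C^{\alpha}} \le C_1\,(\mu M_R + \|w\|_{C^{2+\alpha}})^2,\qquad \|F(w_1) - F(w_2)\|_{C^{\alpha}} \le C_2\,(\mu M_R + \delta)\,\|w_1 - w_2\|_{C^{2+\alpha}}$$
whenever $\|w_i\|_{C^{2+\alpha}}\le\delta$, with $M_R := \|P\|_{C^2(\{|x|\le R\})} = O(1+R^2)$. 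To preserve the vanishing-order-two constraint I would use the right inverse $S$ of $\Delta$ obtained by first solving the Dirichlet problem $\Delta\tilde v = f$, $\tilde v\equiv 0$ on $\{|x| = R\}$, and then setting $Sf(x) := \tilde v(x) - \tilde v(0) - \nabla\tilde v(0)\cdot x$; this gives $\Delta(Sf) = f$, $(Sf)(0) = 0$, $\nabla(Sf)(0) = 0$, and $\|Sf\|_{C^{2+\alpha}} \le K(R)\|f\|_{C^{\alpha}}$ by Schauder theory. Putting $T(w) := S(F(w))$ on
$$X_{\delta} := \{w\in C^{2+\alpha}(\{|x|\le R\}) : w(0) = 0,\ \nabla w(0) = 0,\ \|w\|_{C^{2+\alpha}}\le\delta\},$$
a fixed point $w$ produces $u = \mu P + w$ solving the system with $u(0) = 0$ and $\nabla u(0) = 0$.

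I would then select $\delta := 4 K(R) C_1 (\mu M_R)^2$ and pick $\mu = \mu(R) > 0$ so small that $K(R) C_2 (\mu M_R + \delta) \le 1/2$ and $T(X_{\delta}) \subset X_{\delta}$; Banach's theorem supplies $w_{\mu}\in X_{\delta}$. Since $\delta = O(\mu^2)$ while $\mu\nabla^2 P(0)$ is of order $\mu$ and not a scalar multiple of the identity, the Hessian $\nabla^2 u_{\mu}(0) = \mu\nabla^2 P(0) + \nabla^2 w_{\mu}(0)$ remains non-scalar and nonzero for all sufficiently small $\mu$, so $u_{\mu}$ is of vanishing order two and is not radially symmetric; letting $\mu$ range in $(0,\mu(R)]$ yields infinitely many such solutions.

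The main obstacle will be the Hölder-composition bound for the fully nonlinear $r$-dependence: verifying that $w\mapsto a(\mu P + w,\,\mu\nabla P + \nabla w,\,\mu\nabla^2 P + \nabla^2 w)$ is Lipschitz from $C^{2+\alpha}$ into $C^{\alpha}$ with a Lipschitz constant that shrinks linearly in the size of its argument is exactly what couples $\mu$ and $\delta$ and allows the contraction to close. The $R$-dependence of the Schauder constant $K(R)$ is harmless, since $\mu(R)$ may be chosen as small as needed; this is precisely the point where the autonomous hypothesis is essential, for it prevents any $R$-dependent growth on the right-hand side beyond that already carried by $P$ itself.
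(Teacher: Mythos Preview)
Your proposal is correct and follows essentially the same strategy as the paper: build solutions around a small degree-two harmonic polynomial and close a contraction using the hypotheses $a(0)=0$, $\nabla a(0)=0$, with the autonomous structure guaranteeing that smallness of the solution alone (no smallness of $R$) suffices. The differences are technical rather than conceptual.

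The paper works in a custom Banach space $C_0^{2+\alpha}(D)$ equipped with the top-order seminorm $\|f\|^{(2)}=\max_{|\beta|=2}\|\partial^\beta f\|$, and inverts $\Delta$ via the Newtonian potential, for which it proves (Theorem~3.4) the $R$-\emph{independent} bound $\|\mathcal N(f)\|^{(2)}\le C(n,\alpha)\|f\|$. Within that framework it shows that the contraction constant $\delta(R,\gamma)$ satisfies $\lim_{\gamma\to0}\delta(R,\gamma)=0$ for fixed $R$, then takes $h$ a degree-two harmonic polynomial with $\|h\|^{(2)}\le\gamma_0/2$ and runs Banach's theorem on $\mathbf A(R,\gamma_0)$. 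The paper also subtracts the off-diagonal second derivatives at the origin so that $\partial_k\partial_l u(0)=\partial_k\partial_l h(0)$ for $k\ne l$, which pins down the Hessian exactly and gives both non-radiality and the ``infinitely many'' count. Your route instead uses the Dirichlet Schauder inverse with an $R$-dependent constant $K(R)$, which you absorb by taking $\mu(R)$ small, and you recover the Hessian information from the cruder but sufficient observation that $\nabla^2 u_\mu(0)=\mu\nabla^2 P(0)+O(\mu^2)$. What the paper's machinery buys is a single framework handling Theorems~1.1--1.3 uniformly and explicit $R$-independence of the potential estimate; what your argument buys is that it avoids building the bespoke space $C_0^{2+\alpha}$ and the careful Newtonian-potential H\"older analysis, relying only on standard Schauder theory. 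One small point to tidy: make $P$ explicitly $\re^N$-valued (e.g.\ each component equal to $x_1^2-x_2^2$), and note that the ``standard H\"older-composition estimates'' you invoke are exactly the content of the paper's Section~4.1.1, so you should either cite an external reference or indicate the one-line mean-value computation that yields them from $a\in C^2$.
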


We remark that the conditions (8) and (9) imply that the nonlinearity of $a$ does not contain linear terms and that $u\equiv 0$ is a trivial solution of (10). These conditions are necessary due to a well-know result
of Osserman [O]. In fact, let us solve the scalar equation
$$\Delta u=e^{2u}$$
with the initial value $u(0)=a$, which exists for a small $R$ according to Theorem 1.2. On other hand, by Osserman's theorem applied to this case,
we have
$$R\leq \frac{1}{e^{u(0)}}=e^{-a}.$$
Letting $a\to +\infty$, we see that $R\to 0$. Of course, $a(p)=e^p$ does not satisfies the condition (8). This example also shows that $R$ needs to be small in general. Another example, we may consider, is the eigenvalue equation
$$\Delta u=\lambda u,$$
which, of course, has no non-zero solutions for most values of $\lambda$.

A classical and well-known result of Gidas-Ni-Nirenberg [GNN] says that if $u$ is a positive solution of the Poisson equation
$$\Delta u=f(u)$$
in $|x|<R$
with $f$ $C^1$ and
$$u|_{\{|x|=R\}}=0,$$
then $u$ is radial. However their theorem does not prove the existence of such a solution. As an application of our results,
we prove the following corollary for existence of non-radial solutions.
\begin{cor} Let $f(t)$ be a function of class $C^{k+\alpha}(\re)$ ($k\geq 1, 0<\alpha<1$). Assume that $f(0)=f'(0)=0$. Then the equation
$$\Delta u=f(u)$$
has, for any given $R$,  infinitely many solutions of class $C^{k+2+\alpha}(\{|x|\leq R\})$ which are of vanishing order $2$ at the origin and  are neither radially symmetric nor positive.
\end{cor}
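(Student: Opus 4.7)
The plan is to obtain the corollary as a direct specialization of Theorem 1.3 (the autonomous case) to the scalar setting. Taking $N=1$ and $a(p,q,r)=f(p)$, the hypotheses of that theorem reduce to $a(0)=f(0)=0$ and $\nabla a(0)=(f'(0),0,0)=0$, both of which are granted. Theorem 1.3 therefore supplies, for any preassigned $R>0$, infinitely many solutions $u\in C^{2+\alpha}(\{|x|\le R\})$ of $\Delta u=f(u)$ that vanish to order two at the origin and fail to be radially symmetric.

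The upgrade in regularity from $C^{2+\alpha}$ to $C^{k+2+\alpha}$ is a routine Schauder bootstrap. Since $f\in C^{k+\alpha}$, the composition rule gives $f(u)\in C^{\min(k,m)+\alpha}$ whenever $u\in C^{m+\alpha}$ with $m\ge 1$, and interior Schauder estimates for $\Delta u=f(u)$ then lift $u$ to $C^{\min(k,m)+2+\alpha}$. Starting from $m=2$ and iterating, the regularity index saturates after finitely many steps at $k+2+\alpha$.

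The only step requiring a new observation is the non-positivity. A solution of vanishing order two satisfies $u(0)=0$, $\nabla u(0)=0$ and $H:=\nabla^2 u(0)\ne 0$, and evaluating the equation at the origin gives
\[
\mathrm{tr}\,H=\Delta u(0)=f(u(0))=f(0)=0.
\]
Thus $H$ is a non-zero symmetric matrix of zero trace, hence necessarily indefinite: it carries at least one strictly negative eigenvalue $\lambda$, with unit eigenvector $v$ say. A second-order Taylor expansion at the origin yields $u(tv)=\tfrac{1}{2}\lambda t^2+o(t^2)$, which is strictly negative for all sufficiently small $t>0$. Consequently $u$ takes negative values arbitrarily close to the origin and is not positive; since this argument applies uniformly to the entire family produced by Theorem 1.3, infinitely many of its members are simultaneously non-radial and non-positive, as required.

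I do not foresee a substantive obstacle: beyond invoking Theorem 1.3 and the standard bootstrap, the only real content is the trace identity $\mathrm{tr}\,\nabla^2 u(0)=0$, which automatically forces indefiniteness of the Hessian and therefore the loss of positivity. The one small point to be careful about is that this argument tacitly uses $n\ge 2$, but this is harmless because the vanishing-order-two condition combined with $\Delta u(0)=0$ is in any case incompatible with $n=1$.
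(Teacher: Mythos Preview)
Your approach is the paper's: apply Theorem~1.3 with $N=1$ and $a(p,q,r)=f(p)$. The paper's entire proof is one sentence invoking that theorem, so you are on the same track, with two small differences worth flagging.

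First, a minor gap. Theorem~1.3 as stated requires $a\in C^2_{\mathrm{loc}}$, but the corollary allows $k=1$, in which case $f$ is only $C^{1+\alpha}$. The paper covers this by citing the remark (just after display (40) in Section~4) that when $a$ does not depend on $r$ the $C^2$ hypothesis relaxes to $C^{1+\alpha}$; you should invoke that remark to close the $k=1$ case. Your bootstrap for the higher regularity is fine and matches what the paper sweeps under the opening line of Section~5 (``it suffices to work with $a$ of class $C^2$ or $C^{1+\alpha}$ by the regularity theory of Laplace'').

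Second, your non-positivity argument via the zero-trace indefinite Hessian is correct and genuinely stronger than what the paper offers. The paper says nothing explicit here, presumably because $u(0)=0$ already shows $u$ is not everywhere positive, which is all the statement claims. Your observation that $\mathrm{tr}\,\nabla^2 u(0)=f(0)=0$ forces the Hessian to be indefinite, so that $u$ actually \emph{changes sign} near the origin, is a nice sharpening that the paper does not record.
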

For example, much study has been conducted on seeking positive solutions of the equation
$$\Delta u=c|u|^{\frac{n+2}{n-2}}$$
in $\re^n$. Corollary 1.4, however,  implies that there are indeed infinitely many non-radial and non-positive solutions for a ball of any radius. However it does not conclude the existence in the whole $\re^n$, which is in general impossible due to [O].

We would like to point out that for solvability of linear partial differential equations, there is a well-known so-called Nirenberg-Treves conjecture. This conjecture was recently solved by Dencker [D], following previous important works in [L], [NT], [H], [BF], [LE]. Our consideration on nonlinear cases is very different from linear ones technically.

In the paper [P1] dealing with dimension two, complex analysis allows us to prove similar results with power of Laplace. In higher dimension, we carry out the method of this paper and that of [P1] to study the general system of higher order 
$$\Delta^m u(x)=a(x,u,\nabla u,..., \nabla^m u).$$
in a joint paper [PZ].

This paper is organized as follows. First we introduce a Banach space with vanishing order from which we are seeking possible solutions. Then, we study Newtonian potential as an operator on the Banach space, which seems to have been overlooked in the literature. Finally, we construct an operator map from which we will try to produce a fixed point on a closed subset of the Banach space. A large portion of the paper is on estimating H\" older norm of involved functions in order to apply Fixed point theorem.

\section{Function spaces and their norms}

In this paper throughout, we let $D$ denote the closed ball $\{x\in\re^n\mid|x|\leq R\}$ and $C$ its boundary $\{x\in\re^n \mid |x|=\R\}$. Unless otherwise stated, all functions considered will be real-valued and integrable, with domain $D$. We will consider some classes of functions.

\subsection{H\" older space}
$C^{\alpha}(D)$ is the set of all functions $f$ on $D$ for which
$$ H_{\alpha}[f]=\sup\bigg\{{\frac{|f(x)-f(x')|}{|x-x'|^{\alpha}} \bigg| x,x' \in \D} \bigg\}$$
is finite.

$C^{k}(D)$ is the set of all functions $f$ on $D$ whose $k^{\textup{th}}$ order partial derivatives exist and are continuous.
$C^{k+\alpha}(D)$ is the set of all functions $f$ on $D$ whose $k^{\textup{th}}$ order partial derivatives exist and belong to $C^{\alpha}(D)$.

The symbol $|f|$ or $|f|_\D$ denotes $\textup{sup}_{x\in D}|f(x)|$.
For $f\in C^\alpha(D)$ we define the norm
$$\|f\|=|f|+(2R)^\alpha H_\alpha[f].$$
The set of $N$-tuples $f=(f_1,..., f_N)$ of functions (vector functions or maps) of $C^\alpha (D)$ is denoted by $[C^\alpha (D)]^N$, and $H_\alpha[f]$ is defined as the maximum of $H_\alpha[f_i](i=1,..,N)$. In a similar fashion we define $|f|_A=\sup_{x\in A}|f(x)|$ for functions and vector functions, and write $|f|$ when the domain is understood. Finally, in this paper throughout, the norm of $\re^N$ is taken as $|v|=\max_{1\leq j\leq N}|v_j|$.

The following lemma is well-known; see ([GT]).
\begin{lem}
The function $\|\cdot\cdot\cdot\|$ defined on $C^{\alpha}(D)$ is a norm, with respect to which $C^{\alpha}(D)$ is a Banach algebra: $\|fg\|\leq \|f\|\|g\|$.
\end{lem}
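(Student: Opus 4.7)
The plan is to verify, in order, (i) that $\|\cdot\|$ satisfies the three axioms of a norm, (ii) that $(C^\alpha(D),\|\cdot\|)$ is complete, and (iii) the submultiplicative inequality $\|fg\|\leq\|f\|\|g\|$. Steps (i) and (ii) are essentially bookkeeping: $|\cdot|$ is already a norm on $C(D)$, and $H_\alpha[\cdot]$ is a seminorm (it is non-negative, absolutely homogeneous, and satisfies the triangle inequality in $f$ because the difference quotient is linear in $f$), so any positive linear combination $|f|+(2R)^\alpha H_\alpha[f]$ is a norm; positive definiteness follows because $|f|=0$ already forces $f\equiv 0$. Completeness reduces to the standard completeness of $C^\alpha(D)$ under $|f|+H_\alpha[f]$, since for fixed $R$ the two norms are equivalent (they differ only by the multiplicative constant $(2R)^\alpha$ on the seminorm part).

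The main content is therefore (iii). The key step is the pointwise identity
\[
f(x)g(x)-f(x')g(x') = f(x)\bigl(g(x)-g(x')\bigr) + g(x')\bigl(f(x)-f(x')\bigr),
\]
which after the triangle inequality, division by $|x-x'|^\alpha$, and supremum yields the seminorm estimate $H_\alpha[fg]\leq |f|\,H_\alpha[g]+|g|\,H_\alpha[f]$. Combining this with $|fg|\leq|f|\,|g|$ gives
\[
\|fg\| \leq |f|\,|g| + (2R)^\alpha\bigl(|f|\,H_\alpha[g]+|g|\,H_\alpha[f]\bigr).
\]

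To finish, I expand the product $\|f\|\|g\|=\bigl(|f|+(2R)^\alpha H_\alpha[f]\bigr)\bigl(|g|+(2R)^\alpha H_\alpha[g]\bigr)$ and observe that the resulting expression is the right-hand side above augmented by the nonnegative cross term $(2R)^{2\alpha}H_\alpha[f]\,H_\alpha[g]$. The inequality $\|fg\|\leq\|f\|\|g\|$ is then immediate. There is no real obstacle here; the only point worth flagging is why the weight $(2R)^\alpha$ (rather than $1$) is chosen on the seminorm part. It plays no role in the algebra inequality itself, but in later sections this weighting makes certain small-$R$ contraction estimates cleaner, which is presumably why the authors single it out.
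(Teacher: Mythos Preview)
Your proof is correct and complete. The paper itself does not supply a proof of this lemma at all: it simply labels the result ``well-known'' and refers the reader to [GT]. Your argument---norm axioms from the sum of a norm and a weighted seminorm, completeness by equivalence with the standard H\"older norm, and submultiplicativity via the identity $f(x)g(x)-f(x')g(x')=f(x)(g(x)-g(x'))+g(x')(f(x)-f(x'))$ together with the nonnegative cross term $(2R)^{2\alpha}H_\alpha[f]H_\alpha[g]$---is exactly the standard one found in textbooks such as [GT], so there is no substantive difference in approach, only in that you have written out what the paper leaves as a citation.
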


\subsection{Function spaces with vanishing order at the origin}
Our idea of solving differential equations or systems of order $m$ is to look for solutions that vanish up to $m-1$ order at the origin; this way the norm estimate of the function space to be considered later is made possible in terms of only $m$th order derivatives. This is rather different from classical norms used for higher order derivatives in partial differential equations. 

We denote for $k\geq 1$, $C_0^{k+\alpha}(D)$ the set of all functions in $C^{k+\alpha}(D)$ whose derivatives vanish up to order $k-1$ at the origin. Specifically
$$C_0^{k+\alpha}(D)=\{f\in  C^{k+\alpha}(D)\big | \partial^\beta f(0)=0, |\beta|\leq k-1\},$$
where we have used $\beta=(\beta_1,...,\beta_n)$ and $|\beta|=\beta_1+...+\beta_n$. Also we have
$$\partial^\beta=\partial_1^{\beta_1}\cdot\cdot\cdot\partial_n^{\beta_n}, \mbox{ }
\partial_{ij}=\partial_i\partial_j.$$
One has the following obvious nesting
$$C_0^{m+\alpha}(D)\subset C_0^{m-1+\alpha}(D)\subset\cdot\cdot\cdot\subset C_0^{1+\alpha}(D)\subset C^{\alpha}(D).$$
We now define a function $\|\cdot\cdot\cdot\|^{(k)}$ on $C^{k+\alpha}(D)$:
$$\|f\|^{(k)}=\max_{ |\beta|=k}\{\|\partial^\beta f\|\}.$$
We point out that the function $\|\cdot\cdot\cdot\|^{(k)}$ on $C^{k+\alpha}(D)$ is not a norm since $\|f\|^{(k)}=0$ if and only if $f$ is
a polynomial of degree at most $k-1$. However it becomes norm when restricted to the subspace $C_0^{k+\alpha}(D)$, which is to be proved below and is one of important facts used in
this paper. First we obtain some useful estimates, which will be used repeatedly later.
\begin{lem} If $f\in C^{k+\alpha}(D)$, then, for $x, x'\in D$,
$$|f(x')-\sum_{l=0}^k\frac{1}{l!}\sum_{|\beta|=l}\partial^\beta f(x)(x'-x)^\beta|\leq \frac{1}{k!}\bigg\{\sum_{|\beta|=k}H_\alpha[\partial^\beta f]\bigg\}|x'-x|^{k+\alpha}.$$
\end{lem}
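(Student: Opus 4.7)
The plan is to reduce the multivariable estimate to a one-dimensional Taylor expansion along the segment from $x$ to $x'$ and then use the Hölder continuity of the $k$th order partials to control the remainder. I read the statement with the convention that $\beta=(i_1,\dots,i_l)$ is an ordered tuple, so $\partial^\beta=\partial_{i_1}\cdots\partial_{i_l}$ and $(x'-x)^\beta=(x'-x)_{i_1}\cdots(x'-x)_{i_l}$; this is precisely what is needed to make the factor $1/l!$ reproduce the standard Taylor polynomial, since the multinomial coefficients are absorbed into summing over tuples rather than multi-indices.

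First I would set $\phi(t)=f(x+t(x'-x))$ for $t\in[0,1]$ and write $h=x'-x$. The chain rule gives, for $0\le l\le k$,
$$\phi^{(l)}(t) = \sum_{|\beta|=l}\partial^\beta f(x+th)\,h^\beta,$$
and in particular $\phi^{(k)}$ is $\alpha$-Hölder in $t$ with constant controlled by the $H_\alpha[\partial^\beta f]$ for $|\beta|=k$.

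Second, I would apply the one-dimensional Taylor formula with integral remainder at $t=0$, evaluated at $t=1$. Rewriting it in ``centered'' form by isolating $\phi^{(k)}(0)$ and using $\int_0^1(1-t)^{k-1}dt=1/k$ yields
$$\phi(1) = \sum_{l=0}^{k}\frac{\phi^{(l)}(0)}{l!} + \frac{1}{(k-1)!}\int_0^1(1-t)^{k-1}\bigl[\phi^{(k)}(t)-\phi^{(k)}(0)\bigr]\,dt,$$
where the polynomial part on the right matches exactly the sum appearing in the statement once $\phi^{(l)}(0)$ is expanded via the identity above. The integrand is then estimated term by term using
$$\bigl|\partial^\beta f(x+th)-\partial^\beta f(x)\bigr|\le H_\alpha[\partial^\beta f]\,t^\alpha|h|^\alpha$$
together with $|h^\beta|\le|h|^k$ for $|\beta|=k$ (in the max-norm on $\re^n$ used in the paper). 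Combining these bounds with $t^\alpha\le 1$, $\int_0^1(1-t)^{k-1}dt=1/k$, and the prefactor $1/(k-1)!$ produces the claimed constant $1/k!$ and the $|x'-x|^{k+\alpha}$ factor.

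The only real obstacle is bookkeeping: making sure that the combinatorial factors coming out of the chain-rule expansion of $\phi^{(l)}$ reproduce exactly the $1/l!$ normalization in the statement. With the tuple convention fixed at the outset this is automatic and no multinomial coefficients appear, but if one were instead to sum over multi-indices, the factors $l!/\beta!$ would need to be tracked carefully so as not to spoil the constant in the final Hölder bound.
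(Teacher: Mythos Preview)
Your proposal is correct and follows essentially the same route as the paper: reduce to the one-variable function $\phi(t)=f(x+t(x'-x))$, expand to order $k$, subtract the $k$th-order term at $t=0$, and bound the remaining integral via the H\"older estimate $|\partial^\beta f(x+th)-\partial^\beta f(x)|\le H_\alpha[\partial^\beta f]\,t^\alpha|h|^\alpha$. The only cosmetic difference is that the paper writes the remainder as the $k$-fold iterated integral $\int_0^1\int_0^{t_{k-1}}\cdots\int_0^{t_1}(\cdots)\,dt\,dt_1\cdots dt_{k-1}$ (whose domain has volume $1/k!$), while you use the equivalent single weighted integral $\frac{1}{(k-1)!}\int_0^1(1-t)^{k-1}(\cdots)\,dt$; your remark on the tuple convention for $\beta$ is exactly the bookkeeping that makes the $1/l!$ factors line up in both versions.
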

\begin{proof}
Expanding at $x$, we have the formula
\begin{eqnarray*}
&&f(x')-\sum_{l=0}^{k-1}\frac{1}{l!}\sum_{|\beta|=l}\partial^\beta f(x)(x'-x)^\beta\\
&=&\int_0^1\int_0^{t_{k-1}}\cdot\cdot\cdot\int_0^{t_1}\bigg\{\frac{d^k}{dt^k}f(t x'+(1-t)x)\bigg\}dtdt_1\cdot\cdot\cdot dt_{k-1}\nonumber\\
&=&\int_0^1\int_0^{t_{k-1}}\cdot\cdot\cdot\int_0^{t_1}\bigg\{\sum_{|\beta|=k}\partial^\beta f(tx'+(1-t)x)(x'-x)^\beta\bigg\}dtdt_1\cdot\cdot\cdot dt_{k-1}.\nonumber
\end{eqnarray*}
Hence, we have, by subtracting kth term,
$$f(x')-\sum_{l=0}^k\frac{1}{l!}\sum_{|\beta|=l}\partial^\beta f(x)(x'-x)^\beta$$
$$=\int_0^1\int_0^{t_{k-1}}\cdot\cdot\cdot\int_0^{t_1}\bigg\{\sum_{|\beta|=k}\{\partial^\beta f(tx'+(1-t)x)-\partial^\beta f(x)\}(x'-x)^\beta\bigg\}dtdt_1\cdot\cdot\cdot dt_{k-1}.$$
Thus we have,
$$|f(x')-\sum_{l=0}^k\frac{1}{l!}\sum_{|\beta|=l}\partial^\beta f(x)(x'-x)^\beta|$$
$$\leq \frac{1}{k!}\sum_{|\beta|=k}H_\alpha[\partial^\beta f]|x'-x|^{k+\alpha}.$$
This completes the proof.
\end{proof}
\begin{lem}
If $f\in C_0^{k+\alpha}(D)$, then
$$\|f\|\leq \frac{(3n)^k}{k!}R^k\|f\|^{(k)}.$$
\end{lem}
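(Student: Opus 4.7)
The plan is to combine induction on $k$ with Taylor's theorem (Lemma 2.2) expanded at the origin, where the vanishing conditions $\partial^\beta f(0)=0$ for $|\beta|\leq k-1$ collapse all the low-order terms.

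For the base case $k=1$: since $f(0)=0$, the fundamental theorem of calculus along the segment from $0$ to $x$ gives $f(x)=\int_0^1 \nabla f(tx)\cdot x\,dt$, whence $|f|_D\leq nR\|f\|^{(1)}$. Applying the same identity to $f(x)-f(x')$ and using $|x-x'|\leq(2R)^{1-\alpha}|x-x'|^\alpha$ yields $(2R)^\alpha H_\alpha[f]\leq 2nR\|f\|^{(1)}$, so $\|f\|\leq 3nR\|f\|^{(1)}$, matching the claimed bound exactly when $k=1$.

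For general $k$, I would apply Lemma 2.2 directly with $x=0$; the vanishing conditions kill the whole sum $\sum_{l\leq k-1}$ and leave
\[|f(x')|\leq\frac{|x'|^k}{k!}\sum_{|\beta|=k}|\partial^\beta f(0)|+\frac{|x'|^{k+\alpha}}{k!}\sum_{|\beta|=k}H_\alpha[\partial^\beta f]\leq\frac{|x'|^k}{k!}\sum_{|\beta|=k}\|\partial^\beta f\|\]
after using $|x'|^\alpha\leq(2R)^\alpha$. The multinomial identity $\sum_{|\beta|=k}1/\beta!=n^k/k!$ then immediately gives $|f|_D\leq (n^k R^k/k!)\,\|f\|^{(k)}$. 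For the H\"older piece, the same Taylor argument applied to each lower-order derivative $\partial^\beta f\in C_0^{k-|\beta|+\alpha}(D)$ controls $|\partial^\beta f|_D$, and Lemma 2.2 expanding from a generic $x$ to $x'$ produces polynomial terms whose sum, via the binomial theorem, is at most $(3R)^k-R^k$ (this is what supplies the $3^k$); the remainder is absorbed using $|x-x'|^{k+\alpha}\leq(2R)^k|x-x'|^\alpha$. Adding the $|f|_D$ bound and the $(2R)^\alpha H_\alpha[f]$ bound and checking the elementary inequality $2k+1\leq 3^k$ for $k\geq 1$ then yields the stated constant $(3n)^k/k!$.

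The main obstacle is obtaining the sharp factor $1/k!$ in the denominator. A naive induction that peels off one derivative at a time---using $\partial_i f\in C_0^{k-1+\alpha}(D)$ together with $\|\partial_i f\|^{(k-1)}\leq\|f\|^{(k)}$ and the $k=1$ base case---only produces $\|f\|\leq [(3n)^k/(k-1)!]\,R^k\|f\|^{(k)}$, which is off by a factor of $k$. Recovering the correct $1/k!$ requires exploiting all $k$ vanishing conditions simultaneously through the full Taylor expansion at the origin, so that the multinomial coefficients $1/\beta!$ can be summed to $n^k/k!$ rather than accumulating one factor of $n$ per induction step.
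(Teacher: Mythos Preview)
Your route is workable but is considerably more labored than the paper's, and your sketch of the H\"older piece is muddled. The paper does \emph{not} split $\|f\|$ into $|f|_D$ and $(2R)^\alpha H_\alpha[f]$ at all. Instead it writes the exact integral representation
\[
f(x)=\sum_{|\beta|=k}\Bigl(\int_0^1\!\!\int_0^{t_{k-1}}\!\!\cdots\!\!\int_0^{t_1}\partial^\beta f(tx)\,dt\,dt_1\cdots dt_{k-1}\Bigr)x^\beta
\]
and then invokes the Banach-algebra property of $\|\cdot\|$ (Lemma~2.1) together with the elementary fact $\|x_i\|=|x_i|_D+(2R)^\alpha H_\alpha[x_i]=R+2R=3R$. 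One line yields $\|f\|\le\sum_{|\beta|=k}\frac{1}{k!}\|\partial^\beta f\|\,\|x\|^k\le\frac{n^k}{k!}(3R)^k\|f\|^{(k)}$; the factor $3^k$ comes from $\|x_i\|=3R$, not from any binomial combinatorics.

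Regarding your H\"older estimate: the ``binomial $(3R)^k-R^k$'' path you describe, if carried out via Lemma~2.2 expanded at a generic $x$, actually produces a constant of order $2^k+3^k-1$ (the remainder term alone already contributes $(2R)^k$), which overshoots $3^k$. What \emph{does} give you $2k+1\le 3^k$ is the simpler route: bound $|f(x)-f(x')|\le n\max_i|\partial_i f|_D\,|x-x'|$ by the mean value theorem, then use your sup-norm bound $|\partial_i f|_D\le\frac{n^{k-1}R^{k-1}}{(k-1)!}\|f\|^{(k)}$ (valid since $\partial_i f\in C_0^{k-1+\alpha}$), and finally $|x-x'|\le(2R)^{1-\alpha}|x-x'|^\alpha$. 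That yields $(2R)^\alpha H_\alpha[f]\le \frac{2k\,n^kR^k}{k!}\|f\|^{(k)}$, and adding the sup bound $\frac{n^kR^k}{k!}\|f\|^{(k)}$ gives the factor $2k+1\le 3^k$. So your final inequality is the right one, but you should drop the Lemma~2.2/binomial description and state this mean-value argument instead---or, better, adopt the paper's one-line Banach-algebra trick.
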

\begin{proof}
Let $f\in C_0^{k+\alpha}(D)$, then
\begin{eqnarray*}
f(x)&=&\int_0^1\int_0^{t_{k-1}}\cdot\cdot\cdot\int_0^{t_1}\bigg\{\frac{d^k}{dt^k}f(t x)\bigg\}dtdt_1\cdot\cdot\cdot dt_{k-1}\nonumber\\
 &=&\int_0^1\int_0^{t_{k-1}}\cdot\cdot\cdot\int_0^{t_1}\bigg\{\sum_{|\beta|=k}\partial^\beta f(t x)x^\beta\bigg\}dtdt_1\cdot\cdot\cdot dt_{k-1}\nonumber\\
&=&\sum_{|\beta|=k}\bigg\{\int_0^1\int_0^{t_{k-1}}\cdot\cdot\cdot\int_0^{t_1}\partial^\beta f(t x) dtdt_1\cdot\cdot\cdot dt_{k-1}\bigg\}x^\beta.\nonumber
\end{eqnarray*}
Applying norm inequality, we obtain
\begin{eqnarray*}
\|f\|&\leq & \sum_{|\beta|=k}\frac{1}{k!}\|\partial^\beta f\|\|x^\beta\|\nonumber\\
&\leq &\sum_{|\beta|=k}\frac{1}{k!}\|\partial^\beta f\|\|x\|^k\leq\frac{n^k}{k!}(3R)^k\|f\|^{(k)}\nonumber,
\end{eqnarray*}
where we have used $\|x_i\|=3R$, which is easily verified.
\end{proof}
\begin{lem}
If $f\in C_0^{k+\alpha}(D)$, then, for $|\beta|\leq k$,
$$\|\partial ^\beta f\|\leq \frac{(3n)^{k-|\beta|}}{(k-|\beta|)!}R^{k-|\beta|}\|f\|^{(k)}.$$
\end{lem}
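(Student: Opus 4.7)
The plan is to reduce this inequality to Lemma 2.3, applied not to $f$ itself but to the derivative $g := \partial^\beta f$. Lemma 2.3 already handles the case $|\beta|=0$ (bounding $\|f\|$ by a constant multiple of $R^k\|f\|^{(k)}$), and Lemma 2.4 should just extend this to lower-order derivatives of $f$ by shifting the level of vanishing so that Lemma 2.3 becomes applicable at level $k-|\beta|$.

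The case $|\beta|=k$ is immediate: the stated bound reduces to $\|\partial^\beta f\|\leq\|f\|^{(k)}$, which is simply the definition of $\|f\|^{(k)}=\max_{|\beta|=k}\|\partial^\beta f\|$. For $|\beta|<k$, I would first verify that $g=\partial^\beta f$ inherits the appropriate vanishing property, namely $g\in C_0^{(k-|\beta|)+\alpha}(D)$. Indeed, for any multi-index $\gamma$ with $|\gamma|\leq k-|\beta|-1$, one has $\partial^\gamma g(0)=\partial^{\gamma+\beta}f(0)=0$ since $|\gamma+\beta|\leq k-1$ and $f\in C_0^{k+\alpha}(D)$ vanishes to order $k-1$ at the origin. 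Applying Lemma 2.3 to $g$ with its parameter $k$ replaced by $k-|\beta|$ then yields
$$\|\partial^\beta f\|=\|g\|\leq\frac{(3n)^{k-|\beta|}}{(k-|\beta|)!}R^{k-|\beta|}\|g\|^{(k-|\beta|)}.$$

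To finish, I would compare $\|g\|^{(k-|\beta|)}$ with $\|f\|^{(k)}$: by definition,
$$\|g\|^{(k-|\beta|)}=\max_{|\gamma|=k-|\beta|}\|\partial^\gamma g\|=\max_{|\gamma|=k-|\beta|}\|\partial^{\gamma+\beta}f\|\leq\max_{|\delta|=k}\|\partial^\delta f\|=\|f\|^{(k)},$$
since each $\gamma+\beta$ with $|\gamma|=k-|\beta|$ has total order exactly $k$. Substituting into the previous display produces the stated inequality. The reduction is essentially automatic given Lemma 2.3, so I do not expect any genuine obstacle; the only point requiring attention is recognizing the correct drop in the vanishing order when one differentiates, so that Lemma 2.3 is applicable to $g$ at level $k-|\beta|$ instead of $k$.
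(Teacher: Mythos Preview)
Your proposal is correct and follows essentially the same approach as the paper: observe that $\partial^\beta f\in C_0^{(k-|\beta|)+\alpha}(D)$, apply Lemma 2.3 at level $k-|\beta|$, and then bound $\|\partial^\beta f\|^{(k-|\beta|)}\leq\|f\|^{(k)}$. Your write-up is in fact slightly more explicit than the paper's in justifying the vanishing order of $\partial^\beta f$ and the final norm comparison, but the argument is the same.
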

\begin{proof}
Let  $f\in C_0^{k+\alpha}(D)$. If $|\beta|\leq k$, then $\partial^\beta f\in C_0^{k-|\beta|+\alpha}(D)$. By Lemma 2.3, we have
$$\|\partial^\beta f\|\leq \frac{(3n)^{k-|\beta|}}{(k-|\beta|)!}R^{k-|\beta|}\|\partial^\beta f\|^{(k-|\beta|)}\leq \frac{(3n)^{k-|\beta|}}{(k-|\beta|)!}R^{k-|\beta|}\| f\|^{(k)}.$$
\end{proof}
An immediate corollary is the following:
\begin{lem}
If $f\in C_0^{k+\alpha}(D)$, then, for $l\leq k$,
$$\|f\|^{(l)}\leq \frac{(3n)^{k-l}}{(m-l)!}R^{k-l}\|f\|^{(k)}.$$
\end{lem}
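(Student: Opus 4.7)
The statement asserts a control on the intermediate-order norm $\|f\|^{(l)}$ by the top-order norm $\|f\|^{(k)}$, for a function $f\in C_0^{k+\alpha}(D)$ (vanishing up to order $k-1$ at the origin). My plan is to obtain it as a direct consequence of the preceding Lemma 2.4.

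Recall from the definition that
$$\|f\|^{(l)}=\max_{|\beta|=l}\|\partial^\beta f\|.$$
So to bound $\|f\|^{(l)}$ it suffices to bound $\|\partial^\beta f\|$ uniformly for every multi-index $\beta$ with $|\beta|=l$, and then take the maximum. Lemma 2.4 already gives exactly such a bound: for every $\beta$ with $|\beta|\leq k$,
$$\|\partial^\beta f\|\leq \frac{(3n)^{k-|\beta|}}{(k-|\beta|)!}R^{k-|\beta|}\|f\|^{(k)}.$$
Since the right-hand side depends on $\beta$ only through $|\beta|$, specializing to $|\beta|=l$ yields the same bound $\frac{(3n)^{k-l}}{(k-l)!}R^{k-l}\|f\|^{(k)}$ for every admissible $\beta$; taking the maximum over $|\beta|=l$ gives the conclusion. (Note the denominator should read $(k-l)!$; reading it as such, the argument goes through verbatim.)

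There is essentially no obstacle here: the result is purely bookkeeping, extracting the ``max over $|\beta|$'' version of the per-multi-index estimate in Lemma 2.4. The only thing to verify is that $\partial^\beta f$ for $|\beta|=l\leq k$ belongs to $C_0^{k-l+\alpha}(D)$, which is what allowed Lemma 2.4 to apply in the first place and is immediate from $\partial^\gamma f(0)=0$ for $|\gamma|\leq k-1$. So the whole proof reduces to a one-line invocation of Lemma 2.4 followed by taking the maximum over $\{|\beta|=l\}$.
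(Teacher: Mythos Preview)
Your proof is correct and is exactly the approach the paper takes: the paper states this lemma as ``an immediate corollary'' of Lemma 2.4 with no further argument, which is precisely the one-line invocation (bound each $\|\partial^\beta f\|$ with $|\beta|=l$ by Lemma 2.4 and take the maximum) that you spell out. Your observation that the denominator $(m-l)!$ is a typo for $(k-l)!$ is also correct.
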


In order to verify that $C_0^{k+\alpha}(D)$ is a Banach space with norm $\|\cdot\cdot\cdot\|^{(k)}$, we need the following simple lemma.
\begin{lem}
Let $\{f_m\}_{m=1}^\infty$ be a sequence in $C^\alpha(D)$, with $\|f_m\|\leq M$; assume that $\{f_m\}$ converges to a function $f$ at each point of $D$. Then $f\in C^\alpha(D)$, $\|f\|\leq M$.
\end{lem}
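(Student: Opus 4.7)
The plan is to exploit the simple observation that $\|f_m\|$ is the sum of a sup-norm and a Hölder seminorm, both of which behave well under pointwise limits. The key trick is to combine pointwise bounds on $|f_m(x_0)|$ and on the Hölder difference quotient at $(x_1,x_2)$ into a single pointwise inequality, then pass to the limit before taking any supremum.

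First I would fix arbitrary points $x_0\in D$ and $x_1,x_2\in D$ with $x_1\neq x_2$. Since $|f_m(x_0)|\le |f_m|_D$ and $|f_m(x_1)-f_m(x_2)|/|x_1-x_2|^\alpha\le H_\alpha[f_m]$, the hypothesis $\|f_m\|\le M$ gives the pointwise numerical inequality
$$
|f_m(x_0)| + (2R)^\alpha\,\frac{|f_m(x_1)-f_m(x_2)|}{|x_1-x_2|^\alpha}\;\le\; |f_m|_D + (2R)^\alpha H_\alpha[f_m] \;=\;\|f_m\|\;\le\; M.
$$

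Next I would pass to the limit $m\to\infty$. Because $x_0,x_1,x_2$ are fixed and $f_m\to f$ pointwise on $D$, the three scalar sequences $f_m(x_0), f_m(x_1), f_m(x_2)$ converge to $f(x_0), f(x_1), f(x_2)$ respectively, and therefore
$$
|f(x_0)| + (2R)^\alpha\,\frac{|f(x_1)-f(x_2)|}{|x_1-x_2|^\alpha}\;\le\; M.
$$
In particular, dropping the first term shows $|f(x_1)-f(x_2)|\le M(2R)^{-\alpha}|x_1-x_2|^\alpha$ for all $x_1\neq x_2$ in $D$, so $f\in C^\alpha(D)$.

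Finally I would upgrade the pointwise inequality to a norm inequality by taking suprema sequentially. Taking the supremum over $x_0\in D$ on the left yields
$$
|f|_D + (2R)^\alpha\,\frac{|f(x_1)-f(x_2)|}{|x_1-x_2|^\alpha}\;\le\; M
$$
for every admissible pair $(x_1,x_2)$; then taking the supremum over $(x_1,x_2)$ gives $|f|_D + (2R)^\alpha H_\alpha[f]\le M$, i.e.\ $\|f\|\le M$, completing the proof. There is no real obstacle here: the only subtlety worth flagging is that one should not try to take the three suprema simultaneously against a single $M$ across different $m$, but rather consolidate them into one pointwise inequality at a fixed $m$ and pass to the limit before supping.
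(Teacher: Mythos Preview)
Your proof is correct and follows essentially the same approach as the paper: pass pointwise inequalities through the limit and then take suprema. Your version is actually more careful than the paper's in one respect---by combining the sup-norm term and the H\"older quotient into a single pointwise inequality \emph{before} taking $m\to\infty$, you directly obtain $|f|_D+(2R)^\alpha H_\alpha[f]\le M$, whereas the paper treats the H\"older bound in isolation and then asserts the combined bound without spelling out this joint-estimate step.
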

\begin{proof} Since $|f_m|+(2R)^\alpha H_\alpha(f_m)\leq \|f_m\|\leq M$, we also have
$$|f_m(x)-f_m(x')|\leq (2R)^{-\alpha}M|x-x'|^\alpha$$
for all $x, x'\in D$. Letting $m\to\infty$ in the above inequality, we conclude
$$|f(x)-f(x')|\leq (2R)^{-\alpha}M|x-x'|^\alpha,$$
and
$$|f|+(2R)^\alpha H_\alpha(f)\leq M.$$
Therefore $\|f\|\leq M$ .
\end{proof}
\begin{lem}
Let $\{f_m\}$ be a sequence in $C_0^{k+\alpha}(D)$, with $\|f_m\|^{(k)}\leq M$, and if $\{\partial^\beta f_m\}$, for all $\beta, |\beta|=k$, are Cauchy sequences in the norm $|\cdot\cdot\cdot|$, then there is a function $f\in C_0^{k+\alpha}(D)$ such that $|\partial^\beta f_m-\partial^\beta f|\to 0$ as $m\to\infty$ for all $\beta, 0\leq |\beta|\leq k$, and with $\|f\|^{(k)}\leq M$.
\end{lem}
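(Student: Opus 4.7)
The plan is to bootstrap: the Cauchy condition is only given for the top-order derivatives in sup norm, but the integral identity established inside the proof of Lemma 2.3 lets us propagate this down to every derivative of order $\leq k$. Once I have uniform convergence for every $\partial^\beta f_m$, $|\beta|\leq k$, standard facts identify the limits as $\partial^\beta f$, and Lemma 2.6 upgrades the top-order sup-norm limits to $C^\alpha(D)$ functions with the desired norm bound.

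First I would observe that for any $\beta$ with $|\beta|<k$, the function $\partial^\beta(f_m-f_n)$ lies in $C_0^{k-|\beta|+\alpha}(D)$. The identity
\begin{equation*}
g(x)=\sum_{|\gamma|=k-|\beta|}\Bigl\{\int_0^1\!\!\int_0^{t_{k-|\beta|-1}}\!\!\!\cdots\!\int_0^{t_1}\partial^\gamma g(tx)\,dt\,dt_1\cdots dt_{k-|\beta|-1}\Bigr\}x^\gamma,
\end{equation*}
derived in the proof of Lemma 2.3 and valid for any $g\in C_0^{k-|\beta|+\alpha}(D)$, when applied to $g=\partial^\beta(f_m-f_n)$ gives
\begin{equation*}
|\partial^\beta(f_m-f_n)|_D\leq C(n,k,R)\max_{|\gamma|=k}|\partial^\gamma(f_m-f_n)|_D.
\end{equation*}
Since the right-hand side tends to zero by the Cauchy hypothesis on top derivatives, $\{\partial^\beta f_m\}$ is Cauchy in sup norm for every $|\beta|\leq k$. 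Hence each such sequence converges uniformly on $D$ to some continuous function $g_\beta$, and I set $f:=g_0$.

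Next I would verify that $g_\beta=\partial^\beta f$ for all $|\beta|\leq k$. This is the classical fact that uniform convergence of a sequence together with uniform convergence of its first derivatives ensures the derivative of the limit equals the limit of the derivatives; applying it inductively on $|\beta|$ (adding one partial at a time) identifies every $g_\beta$ with $\partial^\beta f$ and gives $f\in C^k(D)$.

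Finally I would establish the three remaining claims. For the vanishing condition, $\partial^\beta f(0)=\lim_m\partial^\beta f_m(0)=0$ for each $|\beta|\leq k-1$ since $f_m\in C_0^{k+\alpha}(D)$. For the Hölder regularity and norm bound, I apply Lemma 2.6 to each sequence $\{\partial^\beta f_m\}_{m}$ with $|\beta|=k$: these satisfy $\|\partial^\beta f_m\|\leq\|f_m\|^{(k)}\leq M$ and converge pointwise (in fact uniformly) to $\partial^\beta f$, so Lemma 2.6 gives $\partial^\beta f\in C^\alpha(D)$ with $\|\partial^\beta f\|\leq M$. Taking the maximum over $|\beta|=k$ yields $f\in C_0^{k+\alpha}(D)$ with $\|f\|^{(k)}\leq M$, and the already-established uniform convergence $|\partial^\beta f_m-\partial^\beta f|\to 0$ for every $|\beta|\leq k$ completes the conclusion. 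The only step requiring care is the bootstrap from top-order Cauchy to lower-order Cauchy, and this is immediate from the integral identity above; the rest is bookkeeping.
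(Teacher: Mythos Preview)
Your proof is correct and shares the paper's central idea: use the iterated-integral representation of functions in $C_0^{k+\alpha}(D)$ to propagate the top-order Cauchy condition down to all lower-order derivatives (the paper writes this out as its equations (11)--(13)). Where you diverge is in how the limit function is produced and its derivatives identified. The paper \emph{defines} $f$ explicitly by the integral formula
\[
f(x)=\int_0^1\!\!\cdots\!\!\int_0^{t_1}\sum_{|\beta|=k}g^{\beta}(tx)\,x^\beta\,dt\,dt_1\cdots dt_{k-1}
\]
built from the top-order limits $g^\beta$, and then appeals to the Taylor-remainder estimate of Lemma~2.2 to recognize each $g^\beta$ as $\partial^\beta f$ via the definition of differentiability. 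You instead set $f:=g_0$ as the uniform limit of $f_m$ and invoke the classical theorem that uniform convergence of a sequence together with uniform convergence of its partial derivatives forces the limit to be differentiable with the expected derivatives, applied inductively. Your route is a bit more streamlined and avoids Lemma~2.2 entirely, at the cost of citing an external (though standard) result; the paper's route is more self-contained within the lemmas already established and gives an explicit formula for $f$, which is not needed here but is in the spirit of the constructive estimates used later.
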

\begin{proof}
For $\beta, |\beta|=k$, consider $g_m^\beta=\partial^\beta f_m$, then $g_m^\beta\in C^\alpha(D)$ and $\|g_m^\beta\|\leq M$. Applying to Lemma 2.6, we
have functions $g^{\beta}\in C^\alpha(D)$ such that $|g_m^{\beta}-g^{\beta}|\to 0$ as $ m \to \infty$, with $\|g^{\beta}\|\leq M $.
Define $f$ by
$$f(x)=\int_0^1\int_0^{t_{k-1}}\cdot\cdot\cdot\int_0^{t_1}\bigg\{\sum_{|\beta|=k}g^{\beta} (t x)x^\beta \bigg\} dtdt_1\cdot\cdot\cdot dt_{k-1}.$$
Since $f\in C_0^{k+\alpha}(D)$, we have
$$f_m(x)=\sum_{|\beta|=k}\bigg\{\int_0^1\int_0^{t_{k-1}}\cdot\cdot\cdot\int_0^{t_1}\partial^\beta f_m(t x) dtdt_1\cdot\cdot\cdot dt_{k-1}\bigg\}x^\beta$$
Therefore
$$f_m(x)-f(x)=\sum_{|\beta|=k}\bigg\{\int_0^1\int_0^{t_{k-1}}\cdot\cdot\cdot\int_0^{t_1}\bigg\{\partial^\beta f_m(t x)-g^{\beta}(t x)\bigg\} dtdt_1\cdot\cdot\cdot dt_{k-1}\bigg\}x^\beta,$$
whence
$$|f_m-f|\leq \frac{R^k}{k!}\sum_{|\beta|=k}|\partial^\beta f_m-g^{\beta}|,$$
which goes to $0$ as $m\to\infty$, implying $f_m\to f$ in the norm $|\cdot\cdot\cdot|$.

For $|\gamma|=l\leq k-1$ we want to prove that $\{\partial^\gamma f_m(x)\}$ are Cauchy sequences. Indeed,
Since $f_m$ vanishes up to $ k-1$ order at the origin, then $\partial ^\gamma f_m$ vanishes to
$k-1-l$ order at the origin. Thus, we have the formula,
\begin{eqnarray}
\partial^\gamma f_m(x)&=&\int_0^1\cdot\cdot\cdot\int_0^{t_{k-1-l}}\frac{d^{k-l}}{dt^{k-l}}\partial^\gamma f_m(tx)dt\cdot\cdot\cdot dt_{k-1-l}\nonumber\\
&=&\int_0^1\cdot\cdot\cdot\int_0^{t_{k-1-l}}\sum_{|\beta|=k-l}\partial^\beta\partial^\gamma f_m(tx)x^\beta dt\cdot\cdot\cdot dt_{k-1-l}\nonumber\\
&=&\sum_{|\beta|=k-l}\int_0^1\cdot\cdot\cdot\int_0^{t_{k-1-l}}\partial^\beta\partial^\gamma f_m(tx)dt\cdot\cdot\cdot dt_{k-1-l}x^\beta.
\end{eqnarray}
Then
\begin{eqnarray}
&&\partial ^\gamma f_m(x)-\partial ^\gamma f_{m'}(x)\nonumber\\
&=&\sum_{|\beta|=k-l}\int_0^1\cdot\cdot\cdot\int_0^{t_{k-1-l}}\partial^\beta\partial^\gamma \{f_m(tx)-f_{m'}(tx)\}dt\cdot\cdot\cdot dt_{k-1-l}x^\beta,
\end{eqnarray}
Then
\begin{eqnarray}
|\partial ^\gamma f_m(x)-\partial^\gamma f_{m'}(x')|\leq\frac{R^{k-l}}{(k-l)!}\sum_{|\beta|=k}|\partial^\beta f_m-\partial^\beta f_{m'}|,
\end{eqnarray}
which proves that $\{\partial^\gamma f_m(x)\} (|\gamma|\leq k-1)$ are Cauchy sequences since $\{\partial^\beta f_m(x)\} (|\beta|=k)$ are.
We assume that for $|\beta|=l\leq k-1$, $\partial ^\beta f_m(x)$ converges to $g^\beta$ in norm $|\cdot\cdot\cdot|$.
Thus, applying Lemma 2.2,  we have
\begin{eqnarray*}
&&\bigg|f_m(x')-\sum_{l=0}^k\frac{1}{l!}\sum_{|\beta|=l}\partial^\beta f_m(x)(x'-x)^\beta\bigg|\nonumber\\
&\leq& \frac{1}{k!}\sum_{|\beta|=k}H_\alpha[\partial^\beta f_m]|x'-x|^{k+\alpha}\nonumber\\
&\leq& \frac{n^k}{k!}(2R)^{-\alpha}\|f_m\|^{(k)}|x'-x|^{k+\alpha}
\leq \frac{2^k}{k!}(2R)^{-\alpha}M|x'-x|^{k+\alpha},\nonumber
\end{eqnarray*}
which is independent of $m$. Letting $m\to \infty$,
we have
$$|f(x')-f(x)-\sum_{l=1}^k\frac{1}{l!}\sum_{|\beta|=l}g^{\beta}(x'-x)^\beta|\leq \frac{2^k}{k!}(2R)^{-\alpha}M|x'-x|^{k+\alpha},$$
which implies, by definition of differentiability, $g^{\beta}=\partial^\beta f$. This implies $\|f\|^{(k)}\leq M$  by taking limit
from $\|f_m\|^{(k)}\leq M$. The convergence for $|\beta|\leq k-1$, follows from that of $|\beta|=k$ by (12).
\end{proof}
\begin{lem}
The function space $C_0^{k+\alpha}(D)$ equipped with the function $\|\cdot\cdot\cdot\|^{(k)}$ is a Banach space.
\end{lem}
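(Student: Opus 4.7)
The plan has two parts: verifying that $\|\cdot\|^{(k)}$ is actually a norm on $C_0^{k+\alpha}(D)$, and then establishing completeness by leaning on Lemma 2.7 and the standard Banach-space structure of $(C^\alpha(D),\|\cdot\|)$ provided by Lemma 2.1.

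For the norm axioms, positive homogeneity and the triangle inequality for $\|f\|^{(k)}=\max_{|\beta|=k}\|\partial^\beta f\|$ pass through from the corresponding properties of $\|\cdot\|$ without incident. The only substantive axiom is definiteness: as the author observes before Lemma 2.3, on the ambient space $C^{k+\alpha}(D)$ the quantity $\|f\|^{(k)}$ vanishes on polynomials of degree $\leq k-1$. However, on the subspace $C_0^{k+\alpha}(D)$ the estimate of Lemma 2.3,
$$\|f\|\leq \frac{(3n)^k}{k!}R^k\|f\|^{(k)},$$
immediately upgrades $\|f\|^{(k)}=0$ to $\|f\|=0$, and hence $f\equiv 0$. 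So $\|\cdot\|^{(k)}$ is a genuine norm on $C_0^{k+\alpha}(D)$.

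For completeness, I would take a Cauchy sequence $\{f_m\}$ in $(C_0^{k+\alpha}(D),\|\cdot\|^{(k)})$. By definition, for each multi-index $\beta$ with $|\beta|=k$, the sequence $\{\partial^\beta f_m\}$ is Cauchy in $(C^\alpha(D),\|\cdot\|)$, which is known to be a Banach space by Lemma 2.1. Hence there exist functions $g^\beta\in C^\alpha(D)$ with $\|\partial^\beta f_m-g^\beta\|\to 0$; in particular $|\partial^\beta f_m-g^\beta|\to 0$. Since Cauchy sequences are bounded, there is $M$ with $\|f_m\|^{(k)}\leq M$ for all $m$, so the hypotheses of Lemma 2.7 are met. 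That lemma supplies an $f\in C_0^{k+\alpha}(D)$ with $\|f\|^{(k)}\leq M$ and with $|\partial^\beta f_m-\partial^\beta f|\to 0$ for every $|\beta|\leq k$. Uniqueness of limits in the $|\cdot|$-topology forces $\partial^\beta f=g^\beta$ for each $|\beta|=k$, and consequently
$$\|f_m-f\|^{(k)}=\max_{|\beta|=k}\|\partial^\beta f_m-\partial^\beta f\|=\max_{|\beta|=k}\|\partial^\beta f_m-g^\beta\|\longrightarrow 0,$$
so $f_m\to f$ in $\|\cdot\|^{(k)}$, proving completeness.

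The step requiring the most care — and the one I would call the main obstacle, although the preceding lemmas have been arranged to neutralize it — is bridging the gap between $|\cdot|$-convergence (which Lemma 2.7 provides for the top-order derivatives) and $\|\cdot\|$-convergence (which is what $\|\cdot\|^{(k)}$ actually measures). The trick is not to try to strengthen Lemma 2.7 directly, but to extract the Hölder limits $g^\beta$ first from completeness of $C^\alpha(D)$ and then identify them with $\partial^\beta f$ via the weaker sup-norm limits already furnished by Lemma 2.7; that identification is the one place where the two convergence notions must be reconciled.
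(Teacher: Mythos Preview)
Your argument is correct. The overall architecture matches the paper's---use Lemma 2.7 to produce the candidate limit $f\in C_0^{k+\alpha}(D)$ from a Cauchy sequence---but you diverge from the paper in how you close the gap between sup-norm convergence of the top derivatives and convergence in $\|\cdot\|^{(k)}$. You invoke completeness of $(C^\alpha(D),\|\cdot\|)$ from Lemma 2.1 to get Hölder limits $g^\beta$ directly, and then identify $g^\beta=\partial^\beta f$ via uniqueness of sup-norm limits. The paper instead avoids Lemma 2.1 and reuses Lemma 2.7 a second time: for fixed $m$, the tail sequence $\{f_m-f_{m'}\}_{m'>m}$ is bounded in $\|\cdot\|^{(k)}$ by $\epsilon$ and has top derivatives converging in $|\cdot|$ to those of $f_m-f$, so Lemma 2.7 yields $\|f_m-f\|^{(k)}\leq\epsilon$ directly. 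Your route is arguably cleaner and makes the role of $C^\alpha$-completeness explicit; the paper's is more self-contained in that it needs only Lemmas 2.6--2.7. You also spell out the norm axioms (definiteness via Lemma 2.3), which the paper leaves implicit in the discussion preceding Lemma 2.3.
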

\begin{proof}
Let $\{f_m\}$ be a Cauchy sequence in $\|\cdot\cdot\cdot\|^{(k)}$. Then for any $\epsilon>0$, there is $m_0$ so that if $m, m'>m_0$ it holds
\begin{eqnarray}
\|f_m-f_{m'}\|^{(k)}<\epsilon,
\end{eqnarray}
which implies
$$|\|f_m\|^{(k)}-\|f_{m'}\|^{(k)}|<\epsilon,$$
which implies $\{\|f_m\|^{(k)}\}$ are Cauchy sequence and therefore bounded by say $M$. Also by definition of $\|\cdot\cdot\cdot\|^{(k)}$, (14) implies $\partial^\beta f_m$ for
$|\beta|=k$ are Cauchy sequences in $|\cdot\cdot\cdot|$. By Lemma 2.7, there is a function $f\in C^{k+\alpha}_0(D)$ such that
$|\partial^\beta f_m-\partial^\beta f|\to 0$ for $|\beta|=k$ as $m\to\infty$, and $\|f\|^{(k)}\leq M$.

Now the sequence $\{f_m-f_{m'}\}_{m'=m+1}^\infty$ is bounded in $\|\cdot\cdot\cdot\|^{(k)}$ by $\epsilon$, and converges to $f_m-f$, with
$$\partial^\beta(f_m-f_{m'})\to \partial^\beta(f_m-f)$$
in $|\cdot\cdot\cdot|$ as $m'\to\infty$, $|\beta|=k$. By Lemma 2.7 again, it holds
$$\|f_m-f\|^{(k)}<\epsilon,$$
which implies that $f_m\to f$ in $\|\cdot\cdot\cdot\|^{(k)}$. The proof is complete.
\end{proof}
\section{Newtonian potentials and H\"older estimates}
\subsection{Definitions and basic facts}
The fundamental solution of Laplace's equation is given by
$$\Gamma(x-y)=\frac{1}{n(2-n)\omega_n}|x-y|^{2-n}$$
for $n>2$,
$$\Gamma(x-y)=\frac{1}{2\pi}\ln|x-y|$$
for $n=2$.
For an integrable function $f$ on $D$, the Newtonian potential of $f$ is the function $\mathcal{N}(f)(x)$ defined on $\re^n$ by
$$\mathcal{N}(f)(x)=\int_D \Gamma(x-y)f(y)dy.$$
In this paper, our point view is to consider $\mathcal{N}$ as an operator acting on function space. The following is well-known.
\begin{lem} Let $f\in C^\alpha(D)$. Then it holds, for $x\in \mathrm{Int}(D)$,
$$\partial_{ij}\mathcal{N}(f)(x)=\int_D \partial_{ij}\Gamma(x-y)(f(y)-f(x))dy-\frac{\delta_{ij}}{n}f(x)$$
\end{lem}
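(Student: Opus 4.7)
The identity is the classical integral representation for second derivatives of a Newtonian potential with H\"older density. The obstruction to differentiating under the integral sign twice is that $\partial_{ij}\Gamma(x-y)$ has the non-integrable singularity of order $|x-y|^{-n}$; writing $f(y)=f(x)+(f(y)-f(x))$ absorbs this via $|f(y)-f(x)|\le H_\alpha[f]|x-y|^\alpha$, yielding an integrable integrand of order $|x-y|^{-n+\alpha}$, while the leftover term $f(x)\int_D\partial_{ij}\Gamma\,dy$ supplies the constant $\delta_{ij}f(x)/n$ via a symmetric divergence-theorem computation on a ball centered at $x$. My plan is to regularize the kernel, differentiate twice, perform this split, and pass to the limit.

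\textbf{Regularization and the H\"older piece.} Fix a smooth cutoff $\chi:[0,\infty)\to[0,1]$ with $\chi\equiv 0$ on $[0,1/2]$ and $\chi\equiv 1$ on $[1,\infty)$, set $\Gamma_\epsilon(z)=\chi(|z|/\epsilon)\Gamma(z)$, and $w_\epsilon(x)=\int_D\Gamma_\epsilon(x-y)f(y)\,dy$. Since $\Gamma_\epsilon\in C^\infty(\re^n)$, differentiation under the integral yields
\begin{equation*}
\partial_i w_\epsilon(x)=\int_D\partial_i\Gamma_\epsilon(x-y)f(y)\,dy,\qquad \partial_{ij}w_\epsilon(x)=\int_D\partial_{ij}\Gamma_\epsilon(x-y)f(y)\,dy,
\end{equation*}
and because $|\partial_i\Gamma(z)|\le C|z|^{1-n}$ is locally integrable, dominated convergence gives $w_\epsilon\to\mathcal{N}(f)$ and $\partial_i w_\epsilon\to\partial_i\mathcal{N}(f)$ uniformly on compact subsets of $\mathrm{Int}(D)$ as $\epsilon\to 0$. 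Now rewrite
\begin{equation*}
\partial_{ij}w_\epsilon(x)=\int_D\partial_{ij}\Gamma_\epsilon(x-y)\bigl(f(y)-f(x)\bigr)dy+f(x)\int_D\partial_{ij}\Gamma_\epsilon(x-y)\,dy =: I_\epsilon(x)+f(x)J_\epsilon(x).
\end{equation*}
The integrand in $I_\epsilon$ is dominated uniformly in $\epsilon$ by $CH_\alpha[f]|x-y|^{-n+\alpha}$, integrable on $D$ for fixed $x\in\mathrm{Int}(D)$; hence $I_\epsilon(x)\to\int_D\partial_{ij}\Gamma(x-y)(f(y)-f(x))\,dy$ by dominated convergence.

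\textbf{Evaluation of $J_\epsilon$ and conclusion.} For $J_\epsilon$ use the identity $\partial_{x_ix_j}=\partial_{y_iy_j}$ on functions of $x-y$, pick $\rho>\epsilon$ with $B_\rho(x)\subset D$, and split $D=B_\rho(x)\cup(D\setminus B_\rho(x))$. On $B_\rho(x)$ the divergence theorem applied to the smooth $\Gamma_\epsilon$ reduces the integral to a boundary integral on $\partial B_\rho(x)$; substituting $\partial_{y_i}\Gamma(x-y)=(y-x)_i/(n\omega_n|x-y|^n)$ and the outward unit normal $(y-x)/\rho$, and invoking the symmetry identity $\int_{\partial B_\rho(0)}z_iz_j\,dS=\delta_{ij}\omega_n\rho^{n+1}$, the integral evaluates to $\delta_{ij}/n$, independently of $\rho$. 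On $D\setminus B_\rho(x)$ the integrand is smooth, and the divergence theorem produces a boundary contribution on $\partial B_\rho(x)$ with opposite orientation together with a $\partial D$ piece; combining all three contributions isolates the asserted constant term. Passing $\epsilon\to 0$ and invoking the uniform convergence of $\partial_i w_\epsilon\to\partial_i\mathcal{N}(f)$ justifies the interchange $\partial_j\lim=\lim\partial_j$, giving the stated formula. The main technical obstacle is the $J_\epsilon$ computation: scrupulously tracking signs and outward-normal orientations on the inner and outer boundaries so that the constant emerges correctly; elsewhere the proof is routine dominated-convergence limit-passing enabled by the H\"older assumption.
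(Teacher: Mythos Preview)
Your regularization approach is the standard one (essentially that of [GT, Lemma~4.2], one of the references the paper itself cites) and is structurally sound: the split $f(y)=f(x)+(f(y)-f(x))$ and the dominated-convergence treatment of $I_\epsilon$ are correct. The gap is in your evaluation of $J_\epsilon$. After applying the divergence theorem on both $B_\rho(x)$ and $D\setminus B_\rho(x)$, the two $\partial B_\rho(x)$ contributions cancel (opposite orientations, as you say), and what survives is
\[
J_\epsilon(x)=\int_{\partial D}\partial_{y_i}\Gamma(x-y)\,\nu_j(y)\,dS_y,
\]
an integral over the sphere $\partial D=\{|y|=R\}$, which is \emph{not} centered at $x$. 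Your symmetry identity $\int_{\partial B_\rho(0)}z_iz_j\,dS=\delta_{ij}\omega_n\rho^{n+1}$ therefore does not apply, and it is not even a priori clear that this integral is independent of $x\in\mathrm{Int}(D)$. You write that ``combining all three contributions isolates the asserted constant term,'' but the value $\delta_{ij}/n$ you correctly computed on $\partial B_\rho(x)$ has already cancelled against its opposite; the constant in the lemma must come entirely from the $\partial D$ term, which you have not evaluated.

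This is precisely where the paper's indicated argument enters. The paper does not give a self-contained proof but points to [F] and records that the key input is the explicit formula $\mathcal{N}(1)(x)=-\tfrac{1}{2n}|x|^2+\text{const}$ for $x\in\mathrm{Int}(D)$ (with the analogous formula for $n=2$). Granting that, one runs your own regularization with $f\equiv 1$ (so $I_\epsilon\equiv 0$) to get $J_\epsilon(x)=\partial_{ij}w_\epsilon(x)\to\partial_{ij}\mathcal{N}(1)(x)=-\delta_{ij}/n$ for every interior $x$, which identifies the constant and closes the proof. Without this input---or an equivalent direct evaluation of the off-center spherical integral above---your sketch is incomplete at exactly the step you yourself flag as the ``main technical obstacle.''
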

The proof of this lemma can be found in [F, p.204] (also also [GT]) and  is based on the following (not so obvious) integrals:
$$\mathcal{N}(1)(x)=-\frac{1}{2n}|x|^2+\frac{1}{2(n-2)}R^2$$
for $n\geq 3$, $x\in \mathrm{Int}(D)$ and
$$\mathcal{N}(1)(x)=-\frac{1}{4}|x|^2+\frac{1}{2}R^2(\log \frac{1}{R}+\frac{1}{2})$$
for $n=2$, $x\in \mathrm{Int}(D)$.

Now we discuss a technical result in order to deal with H\" older estimate of functions for operator $\mathcal{N}$. Let $x$ be an interior point of $D$, and $D_0$ the intersection of $D$ with the open ball of radius $\rho$ and center $x$. Namely
$$D_0=D\cap\{y\in\re^n||y-x|<\rho\}.$$

The following is essentially proved in [F] and fundamentally important in our approach that follows.
\begin{lem}  There is a constant $C$ only depending on $n$ (independent of $R$ and $\rho$) such that for $1\leq i, j\leq n$
$$\bigg|\int_{D\setminus D_0} \partial_{ij} \Gamma(x-y)dy\bigg|\leq C, x\in \mathrm{Int}(D).$$
\end{lem}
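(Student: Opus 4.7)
The plan is to switch to polar coordinates centered at $x$ and exploit the vanishing angular mean of $\partial_{ij}\Gamma$. Using the explicit formula
\[
\partial_{ij}\Gamma(z)=\frac{1}{n\omega_n}\Bigl(\frac{\delta_{ij}}{|z|^n}-\frac{nz_iz_j}{|z|^{n+2}}\Bigr)
\]
and writing $y=x+r\omega$ with $r=|y-x|$ and $\omega\in S^{n-1}$, the integral to be bounded becomes
\[
\int_{D\setminus D_0}\partial_{ij}\Gamma(x-y)\,dy=\frac{1}{n\omega_n}\int_{\rho}^{\infty}\frac{h(r)}{r}\,dr,\qquad h(r):=\int_{E(r)}(\delta_{ij}-n\omega_i\omega_j)\,d\sigma(\omega),
\]
where $E(r)=\{\omega\in S^{n-1}:x+r\omega\in D\}$. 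Because $\int_{S^{n-1}}(\delta_{ij}-n\omega_i\omega_j)\,d\sigma=0$, one gets $h(r)=0$ whenever $E(r)$ equals the whole sphere ($r\le R-|x|$) or is empty ($r\ge R+|x|$), so the integration is confined to the bounded interval $[\max(\rho,R-|x|),R+|x|]$; the case $x=0$ is trivial since then $h\equiv 0$.

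On the effective interval $E(r)$ is a spherical cap $\{\omega\cdot\hat x\le t_0(r)\}$ with $\hat x=x/|x|$ and $t_0(r)=(R^2-r^2-|x|^2)/(2r|x|)\in(-1,1)$. Rotational symmetry about $\hat x$ forces $h(r)=B(r)\bigl(\delta_{ij}-n\hat x_i\hat x_j\bigr)$ for a scalar $B(r)$, and slicing by $s=\omega\cdot\hat x$ (surface element $(1-s^2)^{(n-3)/2}\,ds\,d\sigma'$ on $S^{n-2}$) reduces $B(r)$ to $\frac{|S^{n-2}|}{n-1}\int_{-1}^{t_0(r)}(ns^2-1)(1-s^2)^{(n-3)/2}\,ds$. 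The identity
\[
\frac{d}{ds}\bigl[s(1-s^2)^{(n-1)/2}\bigr]=(1-ns^2)(1-s^2)^{(n-3)/2}
\]
then yields the closed form $B(r)=-\frac{|S^{n-2}|}{n-1}\,t_0(r)\bigl(1-t_0(r)^2\bigr)^{(n-1)/2}$.

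The last step is the change of variable $r\mapsto s=t_0(r)$. Writing $d=|x|$, implicit differentiation of $r^2+2rds=R^2-d^2$ gives $\frac{dr}{r}=-\frac{d\,ds}{\sqrt{R^2-d^2+d^2s^2}}$, so the remaining scalar integral becomes
\[
\int_{\max(\rho,R-d)}^{R+d}\frac{t_0(r)\bigl(1-t_0(r)^2\bigr)^{(n-1)/2}}{r}\,dr=d\int_{-1}^{s_1}\frac{s(1-s^2)^{(n-1)/2}}{\sqrt{R^2-d^2+d^2s^2}}\,ds,
\]
with $s_1=t_0(\max(\rho,R-d))\in[-1,1]$. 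The pointwise bound $\sqrt{R^2-d^2+d^2s^2}\ge d|s|$, which holds because $|x|\le R$ (interior point), makes $d|s|/\sqrt{R^2-d^2+d^2s^2}\le 1$; what is left is the integrable weight $(1-s^2)^{(n-1)/2}$ on $[-1,1]$. Combining, the absolute value of the original integral is bounded by a constant involving only $|S^{n-2}|$, $n\omega_n$, and $\int_{-1}^{1}(1-s^2)^{(n-1)/2}\,ds$, i.e.\ a constant depending only on $n$.

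The main obstacle is precisely the final inequality $\sqrt{R^2-d^2+d^2s^2}\ge d|s|$: without it the $1/r$ weight in the original integral would generate a $\ln(R/\rho)$-type blow-up, and it is exactly the fact that $x$ lies inside $D$ that produces the cancellation killing both the $R$- and $\rho$-dependence. Everything before is algebraic bookkeeping to cast the $1/r$ factor into a form where this cancellation is visible.
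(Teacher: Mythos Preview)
Your argument is correct. The paper itself does not prove this lemma: it quotes it from Fraenkel [F, Lemma~A.14] and only adds the remark that the hypothesis $\rho\le R/4$ made there is inessential. The proof in [F] (like the related computation in [GT, Lemma~4.4]) proceeds via the divergence theorem, converting the volume integral into surface integrals over the two pieces of $\partial(D\setminus D_0)$ and bounding each piece separately. Your route is genuinely different: you stay with the volume integral in polar coordinates about $x$, use the vanishing spherical mean of $\delta_{ij}-n\omega_i\omega_j$ to confine the $r$--integration to the shell $[R-|x|,\,R+|x|]$, evaluate the spherical-cap integral in closed form via the antiderivative identity $\frac{d}{ds}\bigl[s(1-s^2)^{(n-1)/2}\bigr]=(1-ns^2)(1-s^2)^{(n-3)/2}$, and then the substitution $r\mapsto t_0(r)$ together with the elementary inequality $R^2-d^2+d^2s^2\ge d^2s^2$ (valid precisely because $|x|<R$) strips out all dependence on $R$ and $\rho$. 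The divergence-theorem approach is shorter to state; your computation has the merit of being fully self-contained and of showing explicitly where the potential $\log(R/\rho)$ divergence is cancelled.
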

\begin{rem} In fact, Lemma A.14 in [F, p. 207] assumes that $\rho\leq \frac{R}{4}$. But the proof would go through essentially without the restriction. The reason is that Remark 4 in [F, p. 209] suggests in order to estimate H\" older constant it suffices to consider the case $|x-x'|<\frac{R}{4}$.
Otherwise when $|x-x'|\geq \frac{R}{4}$, one would have
$$\frac{|f(x)-f(x')|}{|x-x'|^\alpha}\leq \frac{2\sup|f(x)|}{(\frac{R}{4})^\alpha}.$$
But this estimate would be very bad for our consideration in which we would take $R\to 0$.
\end{rem}

We note Lemma 4.4 in [GT] states that $\mathcal{N}$ maps $C^\alpha(\frac{1}{2}D)$ to $C^\alpha (D)$ continuously. In fact, more is true. The following result is essential to our method and seems to have been overlooked; and we give here a complete proof using lemmas above.

\begin{thm} If $f\in C^\alpha(D)$, then $\mathcal{N}(f)\in C^{2+\alpha}(D)$. Moreover, there is a constant $C(n,\alpha)$, independent of $R$, such that
$$\|\mathcal{N}(f)\|^{(2)}\leq C(n,\alpha) \|f\|.$$
\end{thm}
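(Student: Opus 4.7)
The plan is to apply Lemma 3.1 to rewrite
$$\partial_{ij}\mathcal{N}(f)(x)=\int_D \partial_{ij}\Gamma(x-y)\bigl(f(y)-f(x)\bigr)\,dy-\frac{\delta_{ij}}{n}f(x),$$
and then bound $|\partial_{ij}\mathcal{N}(f)|$ and $H_\alpha[\partial_{ij}\mathcal{N}(f)]$ separately, each by a constant (independent of $R$) times $\|f\|$. Since $\|g\|=|g|+(2R)^\alpha H_\alpha[g]$, this delivers $\|\partial_{ij}\mathcal{N}(f)\|\le C(n,\alpha)\|f\|$ for every $i,j$, hence $\|\mathcal{N}(f)\|^{(2)}\le C(n,\alpha)\|f\|$. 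As a by-product we recover $\mathcal{N}(f)\in C^{2+\alpha}(D)$.

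For the sup norm I use the bounds $|f(y)-f(x)|\le H_\alpha[f]\,|y-x|^\alpha$ and $|\partial_{ij}\Gamma(x-y)|\le C(n)/|x-y|^n$, so that the integrand is dominated by $CH_\alpha[f]/|x-y|^{n-\alpha}$. Integrating over $D\subset B(x,2R)$ in polar coordinates gives $|\partial_{ij}\mathcal{N}(f)(x)|\le C(n,\alpha)(2R)^\alpha H_\alpha[f]+|f|/n\le C(n,\alpha)\|f\|$, with a constant that does not see $R$.

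For the Hölder seminorm, let $x,x'\in\mathrm{Int}(D)$ and $\delta=|x-x'|$. By the device of Remark 3.3, I only need to treat $\delta<R/4$, because for $\delta\ge R/4$ the ratio $(2R)^\alpha|\partial_{ij}\mathcal{N}(f)(x)-\partial_{ij}\mathcal{N}(f)(x')|/\delta^\alpha$ is controlled by the already estimated sup norm. Set $\bar x=(x+x')/2$ and $\xi=D\cap B(\bar x,\delta)$. I split the difference as
$$\partial_{ij}\mathcal{N}(f)(x)-\partial_{ij}\mathcal{N}(f)(x')=-\tfrac{\delta_{ij}}{n}\bigl(f(x)-f(x')\bigr)+I_1+I_2+J_1+J_2,$$
where $I_1,I_2$ are the near-field integrals of $\partial_{ij}\Gamma(x-y)(f(y)-f(x))$ and $-\partial_{ij}\Gamma(x'-y)(f(y)-f(x'))$ over $\xi$, $J_1=\int_{D\setminus\xi}[\partial_{ij}\Gamma(x-y)-\partial_{ij}\Gamma(x'-y)](f(y)-f(x))\,dy$, and $J_2=(f(x')-f(x))\int_{D\setminus\xi}\partial_{ij}\Gamma(x'-y)\,dy$. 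The near-field terms $I_1,I_2$ are each at most $CH_\alpha[f]\delta^\alpha$ by the same computation as in the sup-norm step. For $J_1$ I apply the mean value theorem to $\partial_{ij}\Gamma$: since $|\bar x-y|\ge\delta$ on $D\setminus\xi$, the segment from $x$ to $x'$ stays outside $B(y,\delta/2)$, giving $|\partial_{ij}\Gamma(x-y)-\partial_{ij}\Gamma(x'-y)|\le C\delta/|\bar x-y|^{n+1}$; combined with $|f(y)-f(x)|\le CH_\alpha[f]|\bar x-y|^\alpha$ and an integration in polar coordinates, $|J_1|\le CH_\alpha[f]\delta^\alpha/(1-\alpha)$.

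The main obstacle is $J_2$, whose scalar factor is $|f(x')-f(x)|\le H_\alpha[f]\delta^\alpha$ but whose integral cannot be estimated by absolute values (the kernel $|\partial_{ij}\Gamma(x'-y)|\sim |x'-y|^{-n}$ is not integrable on $D\setminus\xi$ — one picks up $\log(R/\delta)$). The cancellation in Lemma 3.2 is exactly what is needed. I handle it by writing
$$\int_{D\setminus\xi}\partial_{ij}\Gamma(x'-y)\,dy=\int_{D\setminus B(x',3\delta/2)}\partial_{ij}\Gamma(x'-y)\,dy+\int_{B(x',3\delta/2)\setminus\xi}\partial_{ij}\Gamma(x'-y)\,dy;$$
the first piece is bounded by an absolute constant by Lemma 3.2 (applied at the center $x'$ with radius $3\delta/2$), while the second is over a set of volume $\le C\delta^n$ on which $|x'-y|\ge\delta/2$, so the integrand is $\le C/\delta^n$ and the contribution is again $O(1)$. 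Thus $|J_2|\le CH_\alpha[f]\delta^\alpha$. Assembling, $H_\alpha[\partial_{ij}\mathcal{N}(f)]\le C(n,\alpha)H_\alpha[f]$, so $(2R)^\alpha H_\alpha[\partial_{ij}\mathcal{N}(f)]\le C(n,\alpha)(2R)^\alpha H_\alpha[f]\le C(n,\alpha)\|f\|$, and combining with the sup-norm estimate proves the theorem.
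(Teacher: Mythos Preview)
Your proof is correct and follows essentially the same strategy as the paper: represent $\partial_{ij}\mathcal{N}(f)$ via Lemma~3.1, split into near-field and far-field pieces, and invoke Lemma~3.2 for the term carrying $(f(x')-f(x))\int\partial_{ij}\Gamma$. The only cosmetic differences are that the paper centers the excised ball at $x$ with radius $2|x-x'|$ (rather than at the midpoint with radius $|x-x'|$) and applies Lemma~3.2 directly without your explicit recentering step; your treatment of $J_2$ is in fact slightly more careful on this point. One small wording slip: your final line asserts $H_\alpha[\partial_{ij}\mathcal{N}(f)]\le C\,H_\alpha[f]$, but that inequality is only established for pairs with $\delta<R/4$; for $\delta\ge R/4$ you obtained instead $(2R)^\alpha\cdot(\text{ratio})\le C\|f\|$, so the correct assembled statement is $(2R)^\alpha H_\alpha[\partial_{ij}\mathcal{N}(f)]\le C\|f\|$, which is what you actually need.
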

\begin{proof} First let us recall the norm
$$\|\mathcal{N}(f)\|^{(2)}=\max_{1\leq i,j\leq n}\{\|\partial_{ij} \mathcal{N}(f)\|\}.$$
Let
 $$\phi(x)=\int_D \partial_{ij}\Gamma(x-y)(f(y)-f(x))dy.$$
Then, $\partial_{ij}\mathcal{N}(f)=\phi(x)-\frac{\delta_{ij}}{n}f(x)$, and therefore
$$\|\partial_{ij}\mathcal{N}(f)\|\leq \|\phi\|+\|f\|.$$
Now we proceed to estimate $\|\phi\|$. First, for $x\in \mathrm{Int} (D)$,
\begin{eqnarray}
|\phi(x)|&\leq& \int_D |\partial_{ij}\Gamma(x-y)||f(y)-f(x)|dy\nonumber\\
       &\leq & CH_\alpha[f] \int_D \frac{|y-x|^\alpha}{|x-y|^n}dy\nonumber\\
       &\leq & CH_\alpha[f]\int_{S^{n-1}}\int_0^{2R}\frac{r^\alpha r^{n-1}}{r^n}drdS=C(n,\alpha)H_\alpha[f] R^\alpha.
\end{eqnarray}
To compute the H\"older constant of $\phi$, let $x, x'$ be two (distinct) points of $D$. Let $B(x,\rho)$ be open ball of radius $\rho=2|x-x'|$ with center at $x$. Let $D_0=D\cap B(x,\rho)$. We consider
\begin{eqnarray}
\phi(x)-\phi(x')&=&\int_D \partial_{ij}\Gamma(x-y)(f(y)-f(x))dy-\int_D \partial_{ij}\Gamma(x'-y)(f(y)-f(x'))dy\nonumber\\
&=&\int_{D\setminus D_0} \partial_{ij}\Gamma(x-y)(f(y)-f(x))dy+\int_{D_0} \partial_{ij}\Gamma(x-y)(f(y)-f(x))dy\nonumber\\
&-&\int_{D\setminus D_0} \partial_{ij}\Gamma(x'-y)(f(y)-f(x'))dy-\int_{D_0} \partial_{ij}\Gamma(x'-y)(f(y)-f(x'))dy\nonumber\\
&=&\int_{D\setminus D_0} (\partial_{ij}\Gamma(x-y)-\partial_{ij}\Gamma(x'-y))(f(y)-f(x))dy\nonumber\\
&+&(f(x')-f(x))\int_{D\setminus D_0} \partial_{ij}\Gamma(x'-y)dy\nonumber\\
&+&\int_{D_0} \partial_{ij}\Gamma(x-y)(f(y)-f(x))dy-\int_{D_0} \partial_{ij}\Gamma(x'-y)(f(y)-f(x'))dy\nonumber\\
&=&I_1+I_2+I_3+I_4.
\end{eqnarray}
We are ready to estimate each of $I_j$:
\begin{eqnarray}
|I_1|&=&\bigg|\int_{D\setminus D_0} (\partial_{ij}\Gamma(x-y)-\partial_{ij}\Gamma(x'-y))(f(y)-f(x))dy\bigg|\nonumber\\
&\leq & H_\alpha[f]|x-x'|\int_{D\setminus D_0}|\nabla\partial_{ij}\Gamma(\hat{x}-y)||x-x'|^\alpha dy\nonumber\\
&\leq& C H_\alpha[f]|x-x'|\int_{D\setminus D_0}|x-x'|^\alpha |\hat{x}-y|^{-n-1}dy\nonumber\\
&\leq& C H_\alpha[f]|x-x'|\int_\rho^{2R} r^\alpha r^{-n-1} r^{n-1}dr\nonumber\\
&=& C H_\alpha[f] \rho^\alpha.
\end{eqnarray}
where $\hat{x}$ is a point on the line segment between $x, x'$, and a polar coordinate is used at $\hat{x}$, for which we have
for $y\in D\setminus D_0$, $\rho\leq |y-\hat{x}|\leq 2R$.

By Lemma 3.2, which is a crucial difference from [GT. Lemma 4.4],
we have
\begin{eqnarray}
|I_2|&=&\bigg|(f(x')-f(x))\int_{D\setminus D_0} \partial_{ij}\Gamma(x'-y)dy\bigg|\nonumber\\
&\leq & CH_\alpha[f] |x-x'|^\alpha.
\end{eqnarray}
Similarly,
\begin{eqnarray}
|I_3|&=&\bigg|\int_{D_0} \partial_{ij}\Gamma(x-y)(f(y)-f(x))dy\bigg|\nonumber\nonumber\\
&\leq & H_\alpha[f] \int_{D_0}|\partial_{ij}\Gamma(x-y)||x-y|^\alpha dy\nonumber\nonumber\\
&\leq & CH_\alpha[f] \int_{D_0}|x-y|^{-n}||x-y|^\alpha dy\nonumber\\
&=&CH_\alpha[f] |x-x'|^\alpha.
\end{eqnarray}
For $I_4$, the estimate is identical to $I_3$.
Combining (15)-(19), we complete the proof.
\end{proof}

\section{An integral system}
We consider the integral system of (4)
\begin{eqnarray}
u=h+\mathcal{N}(a).
\end{eqnarray}
Namely,
\begin{eqnarray}
u_1 &=&h_1+\mathcal{N}(a^1)\nonumber\\
u_2 &=&h_2+\mathcal{N}(a^2)\nonumber\\
&\vdots&\nonumber\\
u_N &=&h_N+\mathcal{N}(a^N)\nonumber
\end{eqnarray}
where $h=(h_1(x), ..., h_N(x))$ with $h_j(x)$ being any harmonic function, and $\mathcal{N}(a)$ is the Newtonian potential of $a$, namely
$$\mathcal{N}(a^i)(x)=\int_D \Gamma(x-y)a^i(y, u(y), \nabla u(y), \nabla^2 u(y))dy.$$
It is clear that any solution of (20) is a solution to (4). We further modify the equation (20) to fit our Banach space $C_0^{2+\alpha}(D)$. To do so, we let, for any $f\in C^{2+\alpha}(D)$, $i=1,...,N$,
\begin{eqnarray}
\omega^i(f)(x)=\mathcal{N}(a^i(y, f(y), \nabla f(y), \nabla^2 f(y))(x)
\end{eqnarray}
and define
\begin{eqnarray}
\Theta^i(f)(x)=\omega^i(f)(x)-\omega^i(f)(0)-\sum_{j=1}^n\partial_j(\omega^i(f))(0)x_j-\frac{1}{2}\sum_{k\not =l;k,l=1}^n\partial_k\partial_l(\omega^i(f))(0)x_kx_l.
\end{eqnarray}
First we remark that the last term subtracted is harmonic since $k\not=l$. Therefore $\Delta \Theta^i(f)=\Delta \omega^i(f)$.
We note that by Theorem 3.4, if $f\in C^{2+\alpha}(D)$, then $\omega^i(f)\in C^{2+\alpha}(D)$, and $\Theta^i(f)\in C_0^{2+\alpha}(D)$ by construction (22). More importantly, we have
\begin{eqnarray}
\partial_k\partial_l( \Theta^i(f))(0)=0
\end{eqnarray}
if $k\not =l $.
Now we introduce the Banach space from which we will seek our solutions. Define
$$\mathbf{B}(R)=C_0^{2+\alpha}(D)\times\cdot\cdot\cdot \times C_0^{2+\alpha}(D)$$
which  consists of $N$-copies of $C_0^{2+\alpha}(D)$, and define the function on $\mathbf{B}(R)$ as
$$\|f\|^{(2)}=\max_{1\leq j\leq N}\|f_j\|^{(2)}.$$
By Lemma 2.8, $(\mathbf{B}(R), \|\cdot\cdot\cdot\|^{(2)})$ is a Banach space.
Now we are ready to consider a map as follows
$$\Theta: \mathbf{B}(R)\to \mathbf{B}(R)$$
$$\Theta(f)=(\Theta^1(f), ..., \Theta^N(f)).$$
In order to apply a fixed point theorem on the Banach space $\mathbf{B}(R)$, which has $R$ as a parameter, we will have to estimate $\|\Theta(f)-\Theta(g)\|^{(2)}$, and $\|\Theta(f)\|^{(2)}$ for
any $f, g\in\mathbf{B}(R)$. This is to be done in the next subsections.
\subsection{ $\|\Theta(f)-\Theta(g)\|^{(2)}$ estimates}
It suffices to estimate $\|\Theta^i(f)-\Theta^i(g)\|^{(2)}$. To this end, we see from (22)
\begin{eqnarray}
\|\Theta^i(f)-\Theta^i(g)\|^{(2)}\leq \|\omega^i(f)-\omega^i(g)\|^{(2)}+\sum_{k,l=1}^n|\partial_k\partial_l(\omega^i(f)-\omega^i(g))(0)|.
\end{eqnarray}
First we have $\|\omega^i(f)-\omega^i(g)\|^{(2)}=\|\mathcal{N}(a^i(\cdot, f, \nabla f, \nabla^2 f)-a^i(\cdot, g, \nabla g, \nabla^2 g))\|^{(2)}$ by (21). By Theorem 3.4, we have $\|\mathcal{N}(\phi)\|^{(2)}\leq C\|\phi\|$ for any $\phi\in C^\alpha(D)$, where $C$ is a constant dependent only on $n$, in particular, independent of the radius $R$. Therefore we have
$$\|\omega^i(f)-\omega^i(g)\|^{(2)}\leq C\|a^i(\cdot, f, \nabla f, \nabla^2 f)-a^i(\cdot, g, \nabla g, \nabla^2 g))\|.$$

Now we are estimating $\|a^i(\cdot, f,...)-a^i(\cdot, g,...)\|$. First we use coordinates for $x=(x_k), p=(p_j), q=(q_k^j),$ and $r=(r_{kl}^j)$. From this point on, we will use constant $C$ depending only on $n,N,\alpha$, which varies from line to line. We begin with

\begin{eqnarray}
&&a^i(x, f, \nabla f, \nabla^2 f)-a^i(x, g, \nabla g, \nabla^2 g)\nonumber\\
&=& \int_0^1\frac{d}{dt}a^i(x, tf+(1-t)g, t\nabla f+(1-t)\nabla g, t\nabla^2 f+(1-t)\nabla^2 g)dt\nonumber\\
&=& \sum_{j=1}^N A_j(f_j-g_j)+\sum_{k=1}^n\sum_{j=1}^NB_k^j\partial_k(f_j-g_j)+\sum_{j=1}^N\sum_{k,l=1}^nC_{kl}^j\partial_k\partial_l(f_j-g_j)
\end{eqnarray}
where
\begin{eqnarray}
A_j&=&\int_0^1 \frac{\partial}{\partial p_j}a^i(x, tf+(1-t)g, t\nabla f+(1-t)\nabla g, t\nabla^2 f+(1-t)\nabla^2 g)dt\\
B_k^j&=&\int_0^1 \frac{\partial}{\partial q_k^j}a^i(x, tf+(1-t)g, t\nabla f+(1-t)\nabla g, t\nabla^2 f+(1-t)\nabla^2 g)dt\\
C_{kl}^j&=&\int_0^1 \frac{\partial}{\partial r_{kl}^j}a^i(x, tf+(1-t)g, t\nabla f+(1-t)\nabla g, t\nabla^2 f+(1-t)\nabla^2 g)dt.
\end{eqnarray}
Taking norm $\|\cdot\|$ on (25), and using algebraic property of the norm, we obtain
\begin{eqnarray}
&&\|a^i(x, f, \nabla f, \nabla^2 f)-a^i(x, g, \nabla g, \nabla^2 g)\|\nonumber\\
&\leq& \sum_{j=1}^N \|A_j\|\|f_j-g_j\|+\sum_{k=1}^n\sum_{j=1}^N\|B_k^j\|\|\partial_k(f_j-g_j)\|+\sum_{j=1}^N\sum_{k,l=1}^n\|C_{kl}^j\|\|\partial_k\partial_l(f_j-g_j)\|\nonumber\\
&\leq& \|f-g\|\sum_{j=1}^N \|A_j\|+\|f-g\|^{(1)}\sum_{k=1}^n\sum_{j=1}^N\|B_k^j\|+\|f-g\|^{(2)}\sum_{j=1}^N\sum_{k,l=1}^n\|C_{kl}^j\|\nonumber\\
&\leq& C(R^2\sum_{j=1}^N \|A_j\|+R\sum_{k=1}^n\sum_{j=1}^N\|B_k^j\|+\sum_{j=1}^N\sum_{k,l=1}^n\|C_{kl}^j\|)\|f-g\|^{(2)}
\end{eqnarray}
where we have used, according to Lemma 2.5, that
$$\|f-g\|\leq CR^2\|f-g\|^{(2)}, \mbox{ and } \|f-g\|^{(1)}\leq CR\|f-g\|^{(2)}.$$
Since we will apply Fixed point theorem for a closed subset of $\mathbf{B}(R)$, we consider the following closed subset
$$\mathbf{A}(R, \gamma)=\{f\in \mathbf{B}(R)|\|f\|^{(2)}\leq \gamma\}.$$
Let
$$\mathcal{W}^k=t\nabla^k f(x)+(1-t)\nabla^k g(x),$$
for $k=0, 1, 2.$
We need to study the range of $\mathcal{W}^k$ for $f, g\in \mathbf{A}(R, \gamma)$. The following is needed.
\begin{lem}
If $f, g\in \mathbf{A}(R, \gamma)$, then
$$|\mathcal{W}^k|\leq CR^{2-k}\gamma$$
for $k=0, 1, 2.$
\end{lem}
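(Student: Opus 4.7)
The plan is to derive the estimate as an immediate consequence of Lemma 2.4, whose content is precisely the conversion of a bound on top-order derivatives into bounds on lower-order derivatives with matching powers of $R$, provided the function vanishes to order $k-1$ at the origin. The reason the estimate is quantitative in $R$ is the vanishing order built into the definition of $\mathbf{B}(R)$: a function in $C_0^{2+\alpha}(D)$ and its gradient both vanish at $0$, so $f$ and $\nabla f$ are controlled on the ball of radius $R$ in terms of $\nabla^2 f$ with factors $R^2$ and $R$ respectively.

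Concretely, I fix $f, g \in \mathbf{A}(R,\gamma) \subset \mathbf{B}(R)$, so every component $f_j, g_j$ lies in $C_0^{2+\alpha}(D)$ and satisfies $\|f_j\|^{(2)}, \|g_j\|^{(2)} \leq \gamma$. I then apply Lemma 2.4 with $k = 2$ and with $|\beta|$ ranging over $0, 1, 2$ to each $f_j$, obtaining
$$\|\partial^\beta f_j\| \;\leq\; \frac{(3n)^{2-|\beta|}}{(2-|\beta|)!}\,R^{\,2-|\beta|}\,\|f_j\|^{(2)} \;\leq\; C\,R^{\,2-|\beta|}\,\gamma,$$
with the identical estimate for $g_j$, where $C$ depends only on $n$. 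Since the definition $\|\cdot\| = |\cdot| + (2R)^\alpha H_\alpha[\cdot]$ dominates the sup norm, this yields $|\partial^\beta f_j|_D, |\partial^\beta g_j|_D \leq C R^{2-|\beta|}\gamma$, and hence $|\nabla^k f|_D, |\nabla^k g|_D \leq C R^{2-k}\gamma$ for $k = 0,1,2$.

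Finally, the triangle inequality together with $0 \leq t \leq 1$ gives
$$|\mathcal{W}^k(x)| \;=\; |t\,\nabla^k f(x) + (1-t)\,\nabla^k g(x)| \;\leq\; t\,|\nabla^k f|_D + (1-t)\,|\nabla^k g|_D \;\leq\; C\,R^{\,2-k}\,\gamma,$$
uniformly in $t \in [0,1]$ and $x \in D$, which is the asserted bound. I anticipate no substantive obstacle: the statement is pure bookkeeping, and the real analytic work is already packaged inside Lemma 2.4; the only thing to verify is the correct arithmetic of the exponents of $R$, which is forced by the vanishing-order structure of $C_0^{2+\alpha}(D)$.
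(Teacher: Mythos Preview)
Your proof is correct and follows essentially the same route as the paper: the paper's one-line argument invokes Lemma 2.4 (and its corollary Lemma 2.5) together with the bound $|\mathcal{W}^k|\leq \|\nabla^k f\|+\|\nabla^k g\|$, which is precisely the Lemma 2.4 application plus triangle inequality you carry out in detail.
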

\begin{proof}
This follows from Lemma 2.4 and 2.5 since $|\mathcal{W}^k|\leq \|\nabla^k f\|+\|\nabla^k g\|$.
\end{proof}
We now consider the compact set
$$E(R, \gamma)=D\times\{p\in \re^N||p|\leq CR^2\gamma\}\times\{q\in \re^N\otimes\re^n||q|\leq CR\gamma\}\times\{r\in\mathrm{Sym}(n)\otimes\re^n||r|\leq C\gamma\}$$

In order to carry out estimates of (29), we need to introduce constants that would measure solvability of partial differential equations we consider in this paper.
\begin{eqnarray}
A[R,\gamma]&=&\max\{\bigg|\frac{\partial a^i}{\partial p_j}\bigg|_{E(R,\gamma)}|i=1,..., N; j=1,...,n\}\\
H_\alpha^A[R,\gamma]&=&\max\{H_\alpha\bigg[\frac{\partial a^i}{\partial p_j}\bigg]_{E(R,\gamma)}|i=1,..., N; j=1,...,n\}\\
B[R,\gamma]&=&\max\{\bigg|\frac{\partial a^i}{\partial q_k^j}\bigg|_{E(R,\gamma)}|i=1,..., N; k,j=1,...,n\}\\
H_\alpha^B[R,\gamma]&=&\max\{H_\alpha\bigg[\frac{\partial a^i}{\partial q_k^j}\bigg]_{E(R,\gamma)}|i=1,..., N; k,j=1,...,n\}\\
C[R,\gamma]&=&\max\{\bigg|\frac{\partial a^i}{\partial r_{kl}^j}\bigg|_{E(R,\gamma)}|i=1,..., N; j=1,...,n\}\\
H_\alpha^C[R,\gamma]&=&\max\{H_\alpha\bigg[\frac{\partial a^i}{\partial r_{kl}^j}\bigg]_{E(R,\gamma)}|i=1,..., N; j=1,...,n\}
\end{eqnarray}
Now we need more important constants on Lipschitz in variables of $r$. Here we denote the Lipschitz constant in $r$ as follows
$$ H_1[f]=\sup\{\frac{|f(\cdot, r)-f(\cdot, r'\cdot)|}{|r-r'|}\}.$$
Now we define:
\begin{eqnarray}
H_1^A[R,\gamma]&=&\max\{H_1\bigg[\frac{\partial a^i}{\partial p_j}\bigg]_{E(R,\gamma)}|i=1,..., N; j=1,...,n\}\\
H_1^B[R,\gamma]&=&\max\{H_1\bigg[\frac{\partial a^i}{\partial q_k^j}\bigg]_{E(R,\gamma)}|i=1,..., N; k,j=1,...,n\}\\
H_1^C[R,\gamma]&=&\max\{H_1\bigg[\frac{\partial a^i}{\partial r_{kl}^j}\bigg]_{E(R,\gamma)}|i=1,..., N; j=1,...,n\}
\end{eqnarray}
\subsubsection{Estimate of $\|A_j\|$}
It is obvious that
$$|A_j|\leq A[R,\gamma].$$
Now we are estimating $H^A_\alpha[A_j]$. To do so, we begin with
\begin{eqnarray}
A_j(x)-A_j(x')&=&
\int_0^1\frac{\partial a^i}{\partial p_j}(x, \mathcal{W}^0(x),\mathcal{W}^1(x), \mathcal{W}^2(x))dt\nonumber\\
&-&\int_0^1\frac{\partial a^i}{\partial p_j}(x', \mathcal{W}^0(x'),\mathcal{W}^1(x'), \mathcal{W}^2(x'))dt\nonumber\\
&=&\int_0^1\frac{\partial a^i}{\partial p_j}(x, \mathcal{W}^0(x),\mathcal{W}^1(x), \mathcal{W}^2(x))dt\nonumber\\
&-&\int_0^1\frac{\partial a^i}{\partial p_j}(x', \mathcal{W}^0(x),\mathcal{W}^1(x), \mathcal{W}^2(x))dt\nonumber\\
&+&\int_0^1\frac{\partial a^i}{\partial p_j}(x', \mathcal{W}^0(x'),\mathcal{W}^1(x), \mathcal{W}^2(x))dt\nonumber\\
&-&\int_0^1\frac{\partial a^i}{\partial p_j}(x', \mathcal{W}^0(x'),\mathcal{W}^1(x'), \mathcal{W}^2(x))dt\nonumber\\
&+&\int_0^1\frac{\partial a^i}{\partial p_j}(x', \mathcal{W}^0(x'),\mathcal{W}^1(x'), \mathcal{W}^2(x))dt\nonumber\\
&-&\int_0^1\frac{\partial a^i}{\partial p_j}(x', \mathcal{W}^0(x'),\mathcal{W}^1(x'), \mathcal{W}^2(x'))dt
\end{eqnarray}
and it follows
\begin{eqnarray}
&&|A_j(x)-A_j(x')|\nonumber\\
&\leq & H_\alpha^A[R,\gamma]|x-x'|^\alpha+H_\alpha^A[R,\gamma]\sum_{j=1}^N(|f^j(x)-f^j(x')|+|g^j(x)-g^j(x')|)^\alpha\nonumber\\
&+&H_\alpha^A[R,\gamma]\sum_{j=1}^N(|\nabla f^j(x)-Df^j(x')|+|\nabla g^j(x)-Dg^j(x')|)^\alpha\nonumber\\
&+&H_1^A[R,\gamma]\sum_{j=1}^N(|\nabla^2f^j(x)-\nabla^2 f^j(x')|+|\nabla^2 g^j(x)-\nabla^2 g^j(x')|).
\end{eqnarray}

We remark here that $C^2$ regularity is used to Lipschitz estimate of $r$ variables, and if $a$ is independent of $r$, the regularity of $C^{1+\alpha}$ is enough for the estimate. This fact will be used in proving Theorem 1.2 and Corollary 1.4.

Now we need a lemma on Lipschitz property of $C^{m+\alpha}_0(D)$.
\begin{lem} Let $f\in C^{k+\alpha}_0(D)$.
For $x,x'\in D$, and $|\beta|\leq k-1$, we have
$$|\partial^\beta f(x)-\partial^\beta f(x')| \leq \|f\|^{(|\beta|+1)}|x-x'|.$$
\end{lem}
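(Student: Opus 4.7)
The plan is to exploit convexity of $D$ and reduce to a one-variable fundamental-theorem-of-calculus computation along the segment joining $x'$ to $x$. Since $f \in C_0^{k+\alpha}(D) \subset C^{k+\alpha}(D)$ and $|\beta|\leq k-1$, the function $\partial^\beta f$ is of class at least $C^{1+\alpha}(D)$, so its first partials exist and are continuous on the whole closed ball, which is exactly what is needed.

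The first step is to write, using that the segment $\{x' + t(x-x') : t\in[0,1]\}$ lies in $D$,
\begin{equation*}
\partial^\beta f(x) - \partial^\beta f(x') \;=\; \int_0^1 \nabla\bigl(\partial^\beta f\bigr)\bigl(x'+t(x-x')\bigr)\cdot(x-x')\,dt \;=\;\sum_{j=1}^n (x_j-x'_j)\int_0^1 \partial_j\partial^\beta f\bigl(x'+t(x-x')\bigr)\,dt.
\end{equation*}
The second step is to observe that for each $j$, the multi-index $\gamma = \beta + e_j$ has $|\gamma|=|\beta|+1\leq k$, so $\partial_j\partial^\beta f = \partial^\gamma f$ is one of the derivatives appearing in the definition of $\|f\|^{(|\beta|+1)}$. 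From $\|g\|=|g|+(2R)^\alpha H_\alpha[g]\geq |g|$ we therefore get the pointwise bound $|\partial_j\partial^\beta f(y)|\leq \|\partial^\gamma f\|\leq \|f\|^{(|\beta|+1)}$ uniformly in $y\in D$.

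Combining the two steps and passing the estimate under the integral sign yields the stated inequality, possibly with a dimensional factor depending on whether one applies Cauchy--Schwarz to $\sum_j|x_j-x'_j|$ or uses the mean value theorem in the form $\partial^\beta f(x)-\partial^\beta f(x')=\nabla\partial^\beta f(\xi)\cdot(x-x')$ at a single intermediate point $\xi$. There is no substantive obstacle here: the lemma is essentially a one-line consequence of the fundamental theorem of calculus together with the definition of $\|\cdot\|^{(l)}$, and its main role is bookkeeping — it will be used in later sections (as in the derivation of (41)) to convert Hölder-type differences of lower-order derivatives into controlled expressions involving only the top-order norm $\|f\|^{(k)}$.
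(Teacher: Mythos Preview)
Your proof is essentially identical to the paper's: both write $\partial^\beta f(x)-\partial^\beta f(x')=\int_0^1\nabla\partial^\beta f(tx+(1-t)x')\cdot(x-x')\,dt$ via the fundamental theorem of calculus along the segment and then bound the gradient pointwise by $\|f\|^{(|\beta|+1)}$. The paper does not discuss the dimensional factor you flag---it simply asserts the inequality as stated---so your write-up is, if anything, slightly more careful than the original.
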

\begin{proof}
We have
\begin{eqnarray}
\partial^\beta f(x)-\partial^\beta f(x')&=&\int_0^1\frac{d}{dt}\partial^\beta f(tx+(1-t)x')dt\nonumber\nonumber\\
&=&\int_0^1\nabla\partial^\beta f(tx+(1-t)x')\cdot(x-x')dt.
\end{eqnarray}
It follows from (41) that
\begin{eqnarray}
|\partial^\beta f(x)-\partial^\beta f(x')|&\leq& \|f\|^{(|\beta|+1)}|x-x'|.\nonumber
\end{eqnarray}
\end{proof}
Applying Lemma 4.2 and 2.5, we have
$$|f(x)-f(x')|\leq \|f\|^{(1)}|x-x'|\leq 3n R\|f\|^{(2)}|x-x'|,$$
$$|\partial_j f(x)-\partial_j f(x')|\leq \|f\|^{(2)}|x-x'|.$$
It follows from (40)
\begin{eqnarray}
&&|A_j(x)-A_j(x')|\nonumber\\
&\leq & H_\alpha^A[R,\gamma]|x-x'|^\alpha+H^A_\alpha[R,\gamma]N(3nR)^\alpha (\|f\|^{(2)}+\|g\|^{(2)})^\alpha|x-x'|^\alpha\nonumber\\
&+&H_\alpha^A[R,\gamma]N(\|f\|^{(2)}+\|g\|^{(2)})|x-x'|^\alpha\nonumber\\
&+&H_1^A[R,\gamma]\sum_{j=1}^N(H_\alpha[\nabla^2 f^j]+H_\alpha[\nabla^2 g^j])|x-x'|^\alpha.
\end{eqnarray}
Now we have by (42)
$$|A_j(x)-A_j(x')|\leq (H_\alpha^A[R,\gamma]+2N(3nR)^\alpha\gamma^\alpha H_\alpha^A[R,\gamma] +2N\gamma H_\alpha^A[R,\gamma]+2N\gamma H_1^A[R,\gamma](2R)^{-\alpha})|x-x'|^\alpha$$
which implies
\begin{eqnarray}
H_\alpha [A_j]\leq H_\alpha^A[R,\gamma]+2N(3nR)^\alpha\gamma^\alpha H_\alpha^A[R,\gamma] +2N\gamma H_\alpha^A[R,\gamma]+2N\gamma H_1^A[R,\gamma](2R)^{-\alpha}
\end{eqnarray}
and
\begin{eqnarray}
\|A_j\|&=&|A_j|+(2R)^\alpha H_\alpha[A_j]\nonumber\\
&\leq& A[R,\gamma]+(2R)^\alpha(1+2N(3nR)^\alpha\gamma^\alpha +2N\gamma )H_\alpha^A[R,\gamma]
+2N\gamma H_1^A[R,\gamma].
\end{eqnarray}
Similarly, we have estimates
\begin{eqnarray}
\|B_k^j\|&=&|B_j|+(2R)^\alpha H_\alpha[B_k^j]\nonumber\\
&\leq& B[R,\gamma]+(2R)^\alpha(1+2N(3nR)^\alpha\gamma^\alpha +2N\gamma )H_\alpha^B[R,\gamma]
+2N\gamma H_1^B[R,\gamma],
\end{eqnarray}
and
\begin{eqnarray}
\|C_{kl}^j\|&=&|C_{kl}^j|+(2R)^\alpha H_\alpha[C_{kl}^j]\nonumber\\
&\leq& C[R,\gamma]+(2R)^\alpha(1+2N(3nR)^\alpha\gamma^\alpha +2N\gamma )H_\alpha^C[R,\gamma]
+2N\gamma H_1^C[R,\gamma],
\end{eqnarray}
To simplify the notation, we denote the right side of (44), (45), and (46) respectively by $\delta_A(R,\gamma), \delta_B(R,\gamma)$ and $\delta_C(R,\gamma)$.
Then by (29) we, have
\begin{eqnarray}
&&\|a^i(x,f,\nabla f, \nabla^2 f)-a^i(x,g,\nabla g, \nabla^2 g)\|\nonumber\\
&\leq& C(n,N,\alpha)(R^2\delta_A(R,\gamma)+R\delta_B(R,\gamma)+\delta_C(R,\gamma))\|f-g\|^{(2)}.
\end{eqnarray}
\begin{lem} Let $f\in C^\alpha(D)$. Then it holds
$$|\partial_{kl} \mathcal{N}(f)(0)|\leq C(n,\alpha)\|f\|.$$
\end{lem}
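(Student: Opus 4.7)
The plan is to apply Lemma 3.1 at the point $x = 0$ and then bound each term directly. By Lemma 3.1,
$$\partial_{kl}\mathcal{N}(f)(0)=\int_D \partial_{kl}\Gamma(-y)(f(y)-f(0))\,dy-\frac{\delta_{kl}}{n}f(0),$$
so it suffices to estimate the integral piece and the pointwise piece separately.

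For the integral piece, I would use the standard pointwise bound $|\partial_{kl}\Gamma(y)| \leq C(n)|y|^{-n}$ together with the H\"older estimate $|f(y)-f(0)| \leq H_\alpha[f]\,|y|^\alpha$. Then the integrand is controlled by $CH_\alpha[f]\,|y|^{\alpha-n}$. Since $D$ is contained in the ball of radius $2R$ about $0$, passing to polar coordinates gives
$$\left|\int_D \partial_{kl}\Gamma(-y)(f(y)-f(0))\,dy\right| \leq CH_\alpha[f]\int_0^{2R} r^{\alpha-1}\,dr \cdot |S^{n-1}| = \frac{C(n)}{\alpha}(2R)^\alpha H_\alpha[f].$$
Now the key observation is that $(2R)^\alpha H_\alpha[f]$ is precisely one of the summands in the norm $\|f\| = |f| + (2R)^\alpha H_\alpha[f]$, so this quantity is bounded by $\frac{C(n)}{\alpha}\|f\|$, with a constant depending only on $n$ and $\alpha$ (not on $R$).

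For the pointwise piece, simply $\left|\frac{\delta_{kl}}{n}f(0)\right| \leq \frac{1}{n}|f| \leq \frac{1}{n}\|f\|$. Adding the two estimates yields $|\partial_{kl}\mathcal{N}(f)(0)| \leq C(n,\alpha)\|f\|$ as required. There is no real obstacle here: the only thing to be careful about is making sure the constant is independent of $R$, which is guaranteed because the $R$-dependence arising from the integral $\int_0^{2R} r^{\alpha-1}\,dr$ is absorbed exactly by the $(2R)^\alpha$ factor already built into the norm $\|\cdot\|$. This is the same principle that drives Theorem 3.4, and indeed the result can be viewed as the pointwise specialization of that theorem at the origin.
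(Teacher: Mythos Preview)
Your proof is correct. The paper's own proof is a one-line application of Theorem~3.4: since $|\partial_{kl}\mathcal{N}(f)(0)|\le |\partial_{kl}\mathcal{N}(f)|\le \|\mathcal{N}(f)\|^{(2)}\le C(n,\alpha)\|f\|$, the estimate follows immediately. You instead bypass Theorem~3.4 and work directly from the representation formula of Lemma~3.1 at $x=0$, estimating the singular integral by hand. The computation you carry out is exactly the sup-norm part of the proof of Theorem~3.4 (the estimate labeled (15) there), specialized to the single point $x=0$; so your argument is not really a different idea but rather an unpacking of one ingredient of Theorem~3.4. The paper's route is shorter because Theorem~3.4 is already available; your route is more self-contained and makes the $R$-independence of the constant transparent without appealing to the full H\"older estimate of $\partial_{ij}\mathcal{N}(f)$, which is not actually needed here. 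Either way the content is the same, and you correctly identify this at the end of your write-up.
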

\begin{proof}
By Theorem 3.4, we have
$$|\partial_{kl}l \mathcal{N}(f)(0)|\leq |\partial_{kl}\mathcal{N}(f)|\leq \|\mathcal{N}(f)\|^{(2)}\leq C(n,\alpha)\|f\|.$$
\end{proof}

Finally, we combine (24),and Lemma 4.3
to conclude that
\begin{eqnarray}
\|\Theta^i(f)-\Theta^i(g)\|^{(2)}\leq \delta(R,\gamma)\|f-g\|^{(2)}
\end{eqnarray}
where
\begin{eqnarray}
\delta(R,\gamma)&=&C(n,N,\alpha)(R^2\delta_A(R,\gamma)+R\delta_B(R,\gamma)+\delta_C(R,\gamma))\\
\delta_A(R,\gamma)&=& A[R,\gamma]+(2R)^\alpha(1+2N(3nR)^\alpha\gamma^\alpha +2N\gamma )H_\alpha^A[R,\gamma]
+2N\gamma H_1^A[R,\gamma]\\
\delta_B(R,\gamma)&=& B[R,\gamma]+(2R)^\alpha(1+2N(3nR)^\alpha\gamma^\alpha +2N\gamma )H_\alpha^B[R,\gamma]
+2N\gamma H_1^B[R,\gamma]\\
\delta_C(R,\gamma)&=& C[R,\gamma]+(2R)^\alpha(1+2N(3nR)^\alpha\gamma^\alpha +2N\gamma )H_\alpha^C[R,\gamma]
+2N\gamma H_1^C[R,\gamma].
\end{eqnarray}
\subsection{ $\|\Theta(f)\|^{(2)}$ estimates}
Our proof here is mostly similar to that of (48). We will point out minor differences without repeating the arguments. It suffices to estimate $\|\Theta^i(f)\|^{(2)}$. To this end, we see from (22)
\begin{eqnarray}
\|\Theta^i(f)\|^{(2)}\leq \|\omega^i(f)\|^{(2)}+\sum_{k,l=1}^n|\partial_k\partial_l(\omega^i(f))(0)|.
\end{eqnarray}
First we have $\|\omega^i(f)\|^{(2)}=\|\mathcal{N}(a^i)\|^{(2)}$. By Theorem 3.4, we have $\|\mathcal{N}(a^i)\|^{(2)}\leq C\|a^i\|$, where $C$ is a constant dependent on only on $n$, in particular, independent of the radius $R$.

Now we are estimating $\|a^i\|$. First we use coordinates for $x=(x_j), p=(p_j), q=(q_k^j),$ and $r=(r_{kl}^j)$.
We begin with
\begin{eqnarray}
&&a^i(x,f,\nabla f(x), \nabla^2 f(x))-a^i(0,...,0)\nonumber\\
&=&\int_0^1 \frac{d}{dt}a^i(tx,tf,t\nabla f(x), t\nabla^2 f(x))dt\nonumber\\
&=& \sum_{j=1}^n D_j x_j+\sum_{j=1}^N A_j(f_j)+\sum_{k=1}^n\sum_{j=1}^NB_k^j\partial_k(f_j)+\sum_{j=1}^N\sum_{k,l=1}^nC_{kl}^j\partial_k\partial_l(f_j)
\end{eqnarray}
where
\begin{eqnarray}
A_j&=&\int_0^1 \frac{\partial}{\partial p_j}a^i(x, t f, t\nabla f, t\nabla^2 f)dt\\
B_k^j&=&\int_0^1 \frac{\partial}{\partial q_k^j}a^i(x, t f, t\nabla f, t\nabla^2 f)dt\\
C_{kl}^j&=&\int_0^1 \frac{\partial}{\partial r_{kl}^j}a^i(x, t f, t\nabla f, t\nabla^2 f)dt\\
D_j &=&\int_0^1 \frac{\partial}{\partial x_j}a^i(x, t f, t\nabla f, t\nabla^2 f)dt.
\end{eqnarray}
We notice here we have kept notations similar to (26)-(28), although the integrand is different.
By virtue of the same argument as for (24), we can arrive to the following estimate
\begin{eqnarray}
&&\|a^i(x, f, \nabla f, \nabla^2 f)\|\leq |a^i(0)|+3Rn\|D_j\|\nonumber\\
&+& C(R^2\sum_{j=1}^N \|A_j\|+R\sum_{k=1}^n\sum_{j=1}^N\|B_k^j\|+\sum_{j=1}^N\sum_{k,l=1}^n\|C_{kl}^j\|)\|f\|^{(2)}.
\end{eqnarray}

Similarly, we need to define constants $D[R,\gamma)], H_\alpha^D[R,\gamma],$ and $ H_1^D[R,\gamma]$ as in (30)-(38). Without repeating, we will come out estimate
\begin{eqnarray}
\|\Theta^i(f)\|^{(2)}\leq \eta(R,\gamma)
\end{eqnarray}
where
\begin{eqnarray}
\eta(R,\gamma)&=&C(n,N,\alpha)(|a(0)|+R\delta_D(R,\gamma)+\gamma\delta(R,\gamma))\\
\delta_D(R,\gamma)&=&D[R,\gamma]+(2R)^\alpha(1+2N(3nR)^\alpha\gamma^\alpha +2N\gamma )H_\alpha^D[R,\gamma]
+2N\gamma H_1^D[R,\gamma],
\end{eqnarray}
and $\delta (R,\gamma)$ is given by (49).
\subsection{A general estimate}
Here we collect the estimates together for a later quick reference.
\begin{thm}
Let $\Theta: \mathbf{B}(R)\to\mathbf{B}(R)$ be defined as in (22). If $f, g\in \mathbf{A}(R,\gamma)$, then
\begin{eqnarray}
\|\Theta(f)-\Theta(g)\|^{(2)}&\leq& \delta(R,\gamma)\|f-g\|^{(2)}\\
\|\Theta(f)\|^{(2)}&\leq& \eta(R,\gamma)
\end{eqnarray}
where $\delta(R,\gamma)$ and $\eta(R,\gamma)$ are defined by (49), (61) respectively.
\end{thm}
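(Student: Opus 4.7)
The plan is to assemble this theorem directly from the componentwise estimates already carried out in Sections 4.1 and 4.2, with no new analytic ingredients. Since $\|\Theta(f)-\Theta(g)\|^{(2)} = \max_i \|\Theta^i(f)-\Theta^i(g)\|^{(2)}$ by definition of the norm on $\mathbf{B}(R)$, and similarly for $\|\Theta(f)\|^{(2)}$, it suffices to prove the two inequalities componentwise with bounds independent of $i$.

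For the first inequality, I would start from the triangle-inequality split (24), handling the two summands separately. The first summand $\|\omega^i(f)-\omega^i(g)\|^{(2)}$ equals $\|\mathcal{N}(a^i(\cdot,f,\nabla f,\nabla^2 f) - a^i(\cdot,g,\nabla g,\nabla^2 g))\|^{(2)}$ by (21), and Theorem 3.4 bounds this by $C(n,\alpha)\|a^i(\cdot,f,\ldots)-a^i(\cdot,g,\ldots)\|$. The second summand $\sum_{k,l}|\partial_k\partial_l(\omega^i(f)-\omega^i(g))(0)|$ is controlled by the same quantity via Lemma 4.3. Thus the entire task reduces to estimating $\|a^i(\cdot,f,\nabla f,\nabla^2 f)-a^i(\cdot,g,\nabla g,\nabla^2 g)\|$. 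I would use the integral representation (25) to write this difference as $\sum_j A_j(f_j-g_j) + \sum_{k,j} B_k^j\partial_k(f_j-g_j) + \sum_{j,k,l} C_{kl}^j\partial_k\partial_l(f_j-g_j)$, then apply the Banach-algebra property of $\|\cdot\|$ (Lemma 2.1) followed by Lemma 2.5 to trade the lower-order norms $\|f_j-g_j\|$ and $\|\partial_k(f_j-g_j)\|$ for $R^2\|f-g\|^{(2)}$ and $R\|f-g\|^{(2)}$ respectively. The coefficient norms $\|A_j\|, \|B_k^j\|, \|C_{kl}^j\|$ have already been bounded by $\delta_A, \delta_B, \delta_C$ in (44)--(46), using Lemma 4.2 on the Lipschitz behavior of lower-order derivatives of $f, g \in \mathbf{A}(R,\gamma)$ and the definitions (30)--(38) of the structural constants. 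Collecting and absorbing combinatorial factors into $C(n,N,\alpha)$ yields exactly $\delta(R,\gamma)\|f-g\|^{(2)}$ as defined in (49).

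For the second inequality, the strategy is nearly identical but expands $a^i(x,f,\nabla f,\nabla^2 f)$ around the origin in all four variable groups, using (54). This produces three families of coefficient terms $A_j, B_k^j, C_{kl}^j$ handled exactly as above (contributing $\gamma\,\delta(R,\gamma)$ after using $\|f\|^{(2)}\leq \gamma$), together with a new $\sum D_j x_j$ term whose coefficient satisfies an estimate analogous to (44)--(46), giving $\|D_j\|\leq \delta_D(R,\gamma)$; the factor $\|x_j\|\leq 3R$ from Lemma 2.3 produces the $R\,\delta_D(R,\gamma)$ contribution. The residual constant $a^i(0)$ contributes $|a(0)|$ directly. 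Applying Theorem 3.4 and Lemma 4.3 once more to pass from $\|a^i\|$ to $\|\Theta^i(f)\|^{(2)}$ yields precisely $\eta(R,\gamma)$ as defined in (61).

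The only real work in this theorem is bookkeeping: all Hölder-norm estimates of $A_j, B_k^j, C_{kl}^j$ and $D_j$ are already established in the previous subsections, and Theorem 3.4, Lemma 4.3, Lemma 2.5, and Lemma 4.2 are the four ingredients that turn these pointwise/mean-value expansions into clean bounds in $\|\cdot\|^{(2)}$. There is no genuine obstacle; the potential for slippage lies in tracking the interaction between the power-of-$R$ gains from Lemma 2.5 and the possibly singular $(2R)^{-\alpha}$ factors coming from the $H_1$-Lipschitz terms in $\delta_A, \delta_B, \delta_C$, but the calculations in (42)--(47) already cancel these carefully, so the proof amounts to quoting those lines and taking the maximum over $i$.
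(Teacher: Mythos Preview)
Your proposal is correct and mirrors the paper exactly: Theorem 4.4 is presented there as a collection statement with no separate proof, since (48) and (60) are precisely the conclusions already reached at the end of Sections 4.1 and 4.2 via the chain of lemmas you cite (Theorem 3.4, Lemma 4.3, Lemma 2.5, Lemma 4.2, and the expansions (25), (54)). Taking the maximum over $i$ is the only remaining step, and you identify it correctly.
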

\section{Proof of theorems for Possion type}
In this section, we will give proofs of all results presented in the introduction. It suffices to work with $a$ of class $C^2$ or $C^{1+\alpha}$ by the regularity theory of Laplace.
\subsection{Non-radial functions}
In order to construct solutions that are not radial, it is helpful to have the following simple lemma for this purpose.

\begin{lem}Let $u$ be any function of $C^2(D)$. If $\nabla^2 u(0)$ is not $\lambda I_{n\times n}$ for some $\lambda\in \re$, then $u$ is not radial.
\end{lem}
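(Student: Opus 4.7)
The plan is to prove the contrapositive: if $u$ is radial, then $\nabla^2 u(0) = \lambda I_{n\times n}$ for some $\lambda \in \re$.

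First I would recall that $u$ being radial means $u(Qx) = u(x)$ for every orthogonal matrix $Q \in O(n)$ and every $x \in D$. Differentiating this identity twice in $x$ using the chain rule, with $v(x) := u(Qx)$, gives $\partial_i\partial_j v(x) = Q_{ki}Q_{lj}(\partial_k\partial_l u)(Qx)$, i.e., in matrix form
\[
\nabla^2 u(x) = \nabla^2 v(x) = Q^{\T}\, (\nabla^2 u)(Qx)\, Q.
\]
Evaluating at $x=0$, where $Qx = 0$ for every $Q$, we obtain the identity $H = Q^{\T} H Q$ for $H := \nabla^2 u(0)$, valid for all $Q \in O(n)$. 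Equivalently, $QH = HQ$ for every orthogonal $Q$.

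Next I would deduce from this commutation that $H$ must be a scalar multiple of the identity. One clean way is to argue directly: taking $Q$ to be the diagonal reflection with $Q_{ii} = -1$ and all other diagonal entries $+1$ forces $H_{ij} = 0$ whenever exactly one of $i,j$ equals the chosen index, so $H$ is diagonal; then taking $Q$ to be the permutation that swaps $e_i$ and $e_j$ forces $H_{ii} = H_{jj}$. Hence $H = \lambda I_{n\times n}$ for a common $\lambda \in \re$. (Alternatively one may invoke Schur's lemma, since $O(n)$ acts irreducibly on $\re^n$ for $n \geq 2$; for $n=1$ the conclusion is vacuous since every $1\times 1$ matrix is already $\lambda I$.)

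Combining the two steps, the assumption that $\nabla^2 u(0)$ is not of the form $\lambda I_{n\times n}$ contradicts radiality of $u$, completing the proof. I do not anticipate any real obstacle here; the only point to handle carefully is the chain-rule computation under the orthogonal change of variables and the standard linear-algebra fact that the centralizer of $O(n)$ in the $n\times n$ matrices consists exactly of scalar matrices.
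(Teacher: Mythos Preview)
Your proof is correct, but it follows a genuinely different line from the paper's argument. The paper writes a radial $u$ as $u(x)=v(|x|)$ for a one-variable function $v$, computes
\[
v''(r)=\sum_{k,l}\partial_k\partial_l u(r\omega)\,\omega_k\omega_l
\]
for any fixed unit vector $\omega$, and then lets $r\to 0$ using continuity of $\nabla^2 u$ to obtain a direction-independent limit $\lambda=\omega^{\T}\!\nabla^2 u(0)\,\omega$. Since the Hessian is symmetric, the identity $\omega^{\T}(\nabla^2 u(0)-\lambda I)\omega=0$ for all $\omega\in S^{n-1}$ forces $\nabla^2 u(0)=\lambda I$. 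Your argument instead encodes radiality as $O(n)$-invariance, differentiates twice at the origin to obtain $QH=HQ$ for all $Q\in O(n)$, and then identifies the centralizer of $O(n)$ with scalar matrices via explicit reflections and transpositions (or Schur's lemma). Your route is cleaner in that it avoids the one-sided limit and the appeal to symmetry of the quadratic form, and it generalizes immediately to higher-order jets; the paper's route is more elementary in the sense that it never invokes the full orthogonal group, only the single radial profile $v$.
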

\begin{proof} If $u$ is radial, then there is a function $v$ such that $u(x)=v(r)$ where $x=r\omega, \omega\in S^{n-1}$. Obviously $v$ is $C^2$ for
$r>0$. In fact, we have, for $r>0$,
$$v''(r)=\sum_{k,l=1}^n\frac{\partial^2 u}{\partial x_k\partial x_l} (x)\omega_k\omega_l$$
Taking limit on both side of the equation above, we have
\begin{eqnarray}
\lim_{r\to 0}v''(r)&=&\sum_{k,l=1}^n\lim_{x\to 0}\frac{\partial^2 u}{\partial x_k\partial x_l} (x)\omega_k\omega_l\nonumber\\
&=&\sum_{k,l=1}^n\frac{\partial^2 u}{\partial x_k\partial x_l} (0)\omega_k\omega_l\nonumber.
\end{eqnarray}
In particular $\lim_{r\to 0} v''(r)$ exists and we denote it by $\lambda$, and $\nabla^2 u(0)$ by $A$.
We conclude that $\lambda=\omega A\omega^{\perp}$ for any $\omega\in S^{n-1}$; namely, $\omega (A-\lambda I)\omega^{\perp}=0$ for any $\omega\in S^{n-1}$. This means that $A=\lambda I$, a contradiction.
\end{proof}
\subsection{Proof of Theorem 1.1}
Here we will prove a result slightly more general than Theorem 1.1.
\begin{thm} Let $a(x, p, q, r)=(a_1(x, p, q, r), ..., a_N(x, p, q, r))$ be of class $C_{loc}^{k}$ ($2\leq k\leq\infty, 0<\alpha<1)$,  where $x\in\re^n, p\in\re^N, q\in \re^n\otimes\re^N$, and $r\in \mathrm{Sym}(n)\otimes\re^N$. There is a (small) constant $\delta$ depending on $n,N,\alpha$ such that if
\begin{eqnarray}
a(0)&=& 0,\\
|\nabla_r a(0)|+|\nabla_r^2 a(0)|&<&\delta,
\end{eqnarray}
then the following system: $u(x)=(u_1(x), ..., u_N(x)): \{|x|\leq R\}\to \re^N$,
\begin{eqnarray}
\Delta u(x)&=&a(x, u(x), \nabla u(x), \nabla^2 u(x))
\end{eqnarray}
has infinitely many solutions of $C^{k+2+\alpha}(D)$ of vanishing order two at the origin for sufficiently small values of $R$. Furthermore these solutions are not radially symmetric.
\end{thm}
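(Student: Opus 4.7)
The plan is to set up a parametric family of contractions on a fixed closed ball in $\mathbf{B}(R)$, one map for each choice of a harmonic polynomial parameter, and then apply the Banach fixed point theorem to produce one solution per parameter. Concretely, for a harmonic polynomial $h=(h_1,\ldots,h_N)$ with each $h_j$ harmonic and $h_j(0)=\nabla h_j(0)=0$ (so $h\in\mathbf{B}(R)$), define
\begin{equation*}
\Theta_h(f)(x)=\Theta(f)(x)+h(x),
\end{equation*}
where $\Theta$ is the operator of (22). Since $\Delta h\equiv 0$ and $\Delta\Theta(f)=\Delta\omega(f)=a(\cdot,f,\nabla f,\nabla^2 f)$, any fixed point $u=\Theta_h(u)$ solves (66) and lies in $C_0^{2+\alpha}(D)^N$; moreover $\partial_k\partial_l u(0)=\partial_k\partial_l h(0)$ for $k\neq l$. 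The key observation for producing \emph{distinct} solutions is that $h\mapsto u_h$ is injective: if $u_h=u_{h'}$, then subtracting the two fixed-point equations gives $h=h'$.

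Next I would verify the hypotheses of the contraction mapping theorem on $\mathbf{A}(R,\gamma)$. By Theorem 4.5,
\begin{equation*}
\|\Theta_h(f)-\Theta_h(g)\|^{(2)}\le \delta(R,\gamma)\|f-g\|^{(2)},\qquad \|\Theta_h(f)\|^{(2)}\le \eta(R,\gamma)+\|h\|^{(2)}.
\end{equation*}
I would first fix $\gamma>0$ moderate and then send $R\to 0$. Inspecting the explicit formulas (49)--(52) and (61)--(62), the terms $R^2\delta_A(R,\gamma)$, $R\delta_B(R,\gamma)$, $R\delta_D(R,\gamma)$, and every factor carrying a positive power of $R$ tend to $0$ as $R\to 0$ (because $\nabla_p a, \nabla_q a, \nabla_x a$ and their Hölder constants remain bounded on the compact cylinder $E(R,\gamma)$ by continuity). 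The only coefficient in $\delta(R,\gamma)$ that survives the limit $R\to 0$ is $\delta_C(R,\gamma)$, whose dominant piece reduces to $C[R,\gamma]+2N\gamma H_1^C[R,\gamma]$, and these two numbers converge as $R,\gamma\to 0$ to $|\nabla_r a(0)|$ and (since $a\in C^2$) essentially $|\nabla_r^2 a(0)|$ respectively. This is exactly where the smallness assumption (65) is used: choosing $\delta$ small in terms of $n,N,\alpha$ forces $\delta_C(R,\gamma)\le \tfrac{1}{4C(n,N,\alpha)}$, after which $R$ may be shrunk so that $\delta(R,\gamma)\le\tfrac12$ and $\eta(R,\gamma)\le\gamma/2$. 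Finally, restricting $h$ to the ball $\|h\|^{(2)}\le\gamma/2$ gives $\Theta_h:\mathbf{A}(R,\gamma)\to\mathbf{A}(R,\gamma)$ as a contraction, and Banach's theorem yields a unique fixed point $u_h\in\mathbf{A}(R,\gamma)$.

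To harvest \emph{infinitely many non-radial} solutions, I would take the one-parameter family $h^{(t)}(x)=t\,x_1 x_2\, e_1$ for $|t|\le\gamma/(2C)$ (any fixed $C\ge\|x_1x_2\|^{(2)}$); each $h^{(t)}$ is harmonic, lies in the admissible range, and the injectivity argument above gives a distinct solution $u^{(t)}$ for each $t$. Because $\Theta$ kills all off-diagonal second derivatives at $0$, we have $\partial_1\partial_2 u^{(t)}(0)=t$, so for every $t\neq 0$ the Hessian $\nabla^2 u^{(t)}(0)$ has a nonzero off-diagonal entry and in particular is not a scalar multiple of the identity; Lemma 5.1 then shows that $u^{(t)}$ is not radial. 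The upgrade from $C^{2+\alpha}$ to $C^{k+2+\alpha}$ regularity is routine: differentiating the equation and invoking standard Schauder theory for the Laplacian iteratively increases the regularity by one order each time the regularity of $a$ permits, as the paper comments at the start of Section~5. The main technical obstacle, and the only genuinely nontrivial point, is pinning down the behavior of $\delta_C(R,\gamma)$ as $R\to 0$ and quantifying how small $|\nabla_r a(0)|+|\nabla_r^2 a(0)|$ must be so that $\delta(R,\gamma)<1$ for some admissible pair $(R,\gamma)$; everything else reduces to bookkeeping with the estimates already assembled in Theorem 4.5.
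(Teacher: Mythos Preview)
Your proposal is correct and follows essentially the same approach as the paper: set up $\mathcal{T}(f)=h+\Theta(f)$ for a harmonic quadratic $h$, identify that as $R\to 0$ the only surviving obstruction to contraction is $\delta_C(R,\gamma)\sim C[R,\gamma]+2N\gamma H_1^C[R,\gamma]$, estimate these via $|\nabla_r a(0)|+|\nabla_r^2 a(0)|$, and then invoke the Banach fixed point theorem together with Lemma~5.1. One small imprecision: you first say ``fix $\gamma>0$ moderate'' but later let $\gamma\to 0$; in fact both $R$ and $\gamma$ must be taken small (the paper first picks $\gamma_0$ small so that the $o(R+\gamma)$ error in $H_1^C$ is dominated by $C\tau$, then shrinks $R$), so you should tighten that sentence, but the underlying argument is the paper's.
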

In the proof, our goal is to find $R, \gamma$ sufficiently small so that we have
\begin{eqnarray}
\delta(R,\gamma)&<&1,\nonumber\\
\eta(R,\gamma)&<&\frac{\gamma}{2}.\nonumber
\end{eqnarray}
Once we have these, we can consider an operator $\mathcal{T}$ defined by
$$\mathcal{T}(f)=h+\Theta(f)$$
where $f\in\mathbf{A}(R,\gamma)$ and $h$ is a harmonic vector-function such that $h(0)=\nabla h(0)=0$, and $\|h\|^{(2)}=\frac{\gamma}{2}$.
Therefore $\mathcal{T}$ is a contration operator from $\mathbf{A}(R,\gamma)$ to $\mathbf{A}(R,\gamma)$, for which there is a fixed point that becomes a
solution of (20).

To this end, we first simply $\delta(R,\gamma),\eta(R,\gamma)$, and we can replace them by
\begin{eqnarray}
\delta(R,\gamma)&=&C(C[R,\gamma]+\gamma H_1^C[R,\gamma])+\varepsilon(R,\gamma)\\
\eta(R,\gamma)&=&C(|a(0)|+\gamma C[R,\gamma]+\gamma^2H_1^C[R,\gamma])+\varepsilon(R,\gamma).
\end{eqnarray}
where $C$ is a constant only depending on $n,N,\alpha$, and $\varepsilon(R,\gamma)$ is such that $\lim_{R\to 0}\varepsilon(R,\gamma)=0$ for each $\gamma>0$. We now give estimates of $C[R,\gamma], H_1^C[R,\gamma]$ in terms of conditions (1)-(3). Let $\sigma=r_{kl}^j$ be a component variable of $r$. We want to estimate the Lipschitz constant of $\partial_\sigma a$. In fact, we have
\begin{eqnarray}
&&\partial_\sigma a^i(x, p, q, r)-\partial_\sigma a^i(x, p, q, r')\nonumber\\
&=& \int_0^1\frac{d}{dt}a^i(x, p, q, tr+(1-t)r')dt\nonumber\\
&=&\int_0^1 \sum\partial_{r_{kl}^j}\partial_\sigma a^i(\cdot)(r_{kl}^j-r'{_{kl}^j})dt.
\end{eqnarray}
Here $a^i(\cdot)$ is defined naturally as given in the integrand. It follows from (70) that
\begin{eqnarray}
H_1[\partial_\sigma a^i]|_{E(R,\gamma)}&\leq & |\nabla_r^2 a^i|_{E(R,\gamma)}\nonumber\\
&= &|\nabla_r^2 a^i(0)|+o(R+\gamma),
\end{eqnarray}
where $o(R+\gamma)\to 0$ as $R, \gamma\to 0$ by continuity of $C^2$ smoothness of $a$ and $\lim_{R,\gamma\to 0}E(R,\gamma)=\{0\}$. From (71) and definition (38), we have
$$H_1^C[R,\gamma]\leq |\nabla_r^2 a(0)|+o(R+\gamma).$$
On the other hand, we have
\begin{eqnarray}
&&\partial_\sigma a^i(x, p, q, r)-\partial_\sigma a^i(0)\nonumber\\
&=& \int_0^1\frac{d}{dt}a^i(tx, tp, tq, tr)dt\nonumber\\
&=&\int_0^1 \bigg\{\sum \partial_{x_j}\partial_\sigma a^i(\cdot) x_j+\sum \partial_{p_j}\partial_\sigma a^i(\cdot)p_j\nonumber\\
&+&\sum \partial_{q_k^j}\partial_\sigma a^i(\cdot)q_k^j+ \sum\partial_{r_{kl}^j}\partial_\sigma a^i(\cdot)r_{kl}^j\bigg\}dt.
\end{eqnarray}
Notice that for $(x,p, q, r)\in E(R,\gamma)$, we have $|x|\leq R, |p|\leq CR^2\gamma, |q|\leq CR\gamma$, and $|r|\leq C\gamma$. Hence by (72), putting terms with $R, \gamma$ together, we have
$$|\partial_\sigma a^i|_{E(R,\gamma)}\leq C(|\nabla_r a^i(0)|+|\nabla_r^2 a^i(0)|)+\varepsilon(R,\gamma)$$
where $\varepsilon(R,\gamma)\to 0$ as $R\to 0$ for each given $\gamma$. This implies
$$C[R,\gamma]\leq C(|\nabla_r a(0)|+|\nabla_r^2 a(0)|)+\varepsilon(R,\gamma)$$
Letting
$$\tau=|\nabla_r a(0)|+|\nabla_r^2 a(0)|,$$
we can have
$$\delta(R,\gamma)=C(\tau+\gamma(\tau+o(R+\gamma))+\varepsilon(R,\gamma)$$
$$\eta(R,\gamma)=C(\gamma\tau)+\gamma^2(\tau+o(R+\gamma))+\varepsilon(R,\gamma).$$
By $o(R+\gamma)\to 0$, there exist $R_0,\gamma_0 (<1)$ such that
$$o(R+\gamma)\leq C\tau$$
for $R\leq R_0, \gamma\leq \gamma_0$.
Hence  we have
$$\delta(R,\gamma_0)\leq C\tau+\gamma_02C\tau+\varepsilon(R,\gamma_0)$$
$$\eta(R,\gamma_0)\leq \gamma_03C\tau+\varepsilon(R,\gamma_0).$$
Now choose $\delta=\frac{3}{10C}$. If $\tau<\delta$, we have
$$\delta(R,\gamma_0)\leq \frac{3}{10}+\frac{2\gamma_0}{10}+\varepsilon(R,\gamma_0)$$
$$\eta(R,\gamma_0)\leq \frac{3\gamma_0}{10}+\varepsilon(R,\gamma_0).$$
Finally, we choose $R$ small so that $\varepsilon(R,\gamma_0)\leq \min\{ \frac{4}{10},\frac{2\gamma_0}{10}\}$. It follows that
$$\delta(R,\gamma_0)\leq \frac{9}{10}$$
$$\eta(R,\gamma_0)\leq \frac{\gamma_0}{2}.$$
This is equivalent to that for $f, g\in \mathbf{A}(R,\gamma_0)$
\begin{eqnarray}
\|\Theta(f)-\Theta(g)\|^{(2)}&\leq& \frac{9}{10}\|f-g\|^{(2)}\\
\|\Theta(f)\|^{(2)}&\leq& \frac{\gamma_0}{2}.
\end{eqnarray}
Now we are ready to apply Fixed point theorem for $R,\gamma_0$. Let $h$ be any harmonic homogenous polynomial of degree $2$  so that $\|h\|^{(2)}=
\frac{\gamma_0}{2}$. Here let us be more specific. Let $h=\sum_{k,l=1}^n a_{kl}x_kx_l$ where $a_{kl}\in\re^N, a_{kl}=a_{lk}$. It is easy to see that $h$ is harmonic if and only if the trace of $a$ is zero, i.e., $\sum_{k=1}^n a_{kk}=\{0\}$. We also see that
$$\|h\|^{(2)}=\max_{1\leq k,l\leq n}|a_{kl}|.$$
So we take a harmonic polynomial of degree $2$ such that  $0<\max_{1\leq k,l\leq n}|a_{kl}|<\frac{\gamma_0}{2}$.
Then we
consider the map
$$\mathcal{T}(u)=h+\Theta(u)$$
which maps $\mathbf{A}(R,\gamma_0)$ to $\mathbf{A}(R,\gamma_0)$ as contraction map by (73),(74).
So $\mathcal{T}$ has a fixed point $u$ in $\mathbf{A}(R,\gamma_0)$. We claim the vanishing order of $u$ at the origin is $2$. In fact, by (23),
 $$\partial_k\partial_l u(0)=\partial_k\partial_l h(0)=a_{kl}
\not=\{0\}$$
 for $k\not=l$,
and also $u(0)=0$ and $\partial_k u(0)=0$ by the construction. At the same time, for different $\{a_{kl}\}$ we have different solutions $u$. Now we prove all these solutions are not radial. Assume there is $i_0$ such that
$(a^{i_0}_{kl})\not=\{0\}$. If $(a^{i_0}_{kl})=\lambda I$ for some $\lambda\in\re$, then since the trace of $(a^{i_0}_{kl})$ is zero, we conclude that $\lambda=0$, which implies $(a^{i_0}_{kl})=0$, a contradiction. By Lemma 5.1, $u^{i_0}$ is not radial. So $u$ is not radial neither.
 This completes the proof.
\begin{rem} Given $\gamma_0$ as in the proof above, we actually prove that the solution space can be parameterized by at least $\frac{n^2-n}{2}N$ parameters from the coefficients of the harmonic polynomials of degree $2$. In fact, the different choice of $a_{kl}$ produces different $u$ which is defined in the same domain of radius $R$ and with different Hessian at the origin since $\partial_k\partial_l u(0)=\partial_k\partial_l h(0)=a_{kl}$ by (23). Therefore there exist infinitely many solutions of vanishing order two at the origin. This remark also applies to other theorems
\end{rem}

\subsection{Proof of Theorem 1.2}
First we consider the case where $c_0=0, c_1=0$ as in (6), (7). Since $a$ is independent of $r$, we have $C[R,\gamma]=H_1^C[R,\gamma]=H_\alpha^C[R,\gamma]=0$. Here we note that $C^{1,\alpha}$ regularity of $a$ is only needed with a careful inspection of the proof of Theorem 4.4. Therefore we can replace constants as follows
$$\delta(R,\gamma)=\varepsilon(R,\gamma)$$
$$\eta(R,\gamma)=C|a(0)|+\epsilon(R,\gamma)$$
where $$\lim_{R\to 0}\varepsilon(R,\gamma)=\lim_{R\to 0}\epsilon(R,\gamma)=0$$ for each $\gamma>0$.
Now we choose $\gamma_0$ large enough that $\frac{\gamma_0}{4}>C|a(0)|$. Then we choose $R$ sufficiently small that $\varepsilon(R,\gamma_0)<\frac{1}{2}$ and $\epsilon(R,\gamma_0)<\frac{\gamma_0}{4}$. As a result, we have
$$\delta(R,\gamma_0)<\frac{1}{2}$$
$$\eta(R,\gamma_0)<\frac{\gamma_0}{2}.$$
As in Theorem 1.1, we find a solution in $\mathbf{A}(R,\gamma_0)$ which vanishes up to order $1$ at the origin. To get general case,
we consider a new system

$$\Delta \tilde{u}=a(x, \tilde{u}+c_0+c_1\cdot x, \nabla (\tilde{u}+c_0+c_1\cdot x))=\tilde{a}(x,\tilde{u},\nabla \tilde{u})$$
We can solve this system for $\tilde{u}$. Then $u=\tilde{u}+c_0+c_1\cdot x$ is the solution we are seeking for.

\subsection{Proof of Theorem 1.3}
Here we fix $R$, and choose $\gamma$ small to prove the existence of semi-global solutions. Since $a$ is independent of $x$, so we have
$D[R,\gamma]=H_\alpha^D[R,\gamma]=H_1^D[R,\gamma]=0$. Therefore  we can replace $\delta(R,\gamma), \eta(R,\gamma)$, using (8), (61), by
\begin{eqnarray}
\eta(R,\gamma)=C\gamma\delta(R,\gamma)
\end{eqnarray}
where $\delta(R,\gamma)$ is still given by (49). It suffices to prove that
for each given $R$, we have
\begin{eqnarray}
\lim_{\gamma\to 0}\delta(R,\gamma)=0.
\end{eqnarray}
If this is proved, then we can choose $\gamma_0$ so that
$$\delta(R,\gamma_0)<\frac{1}{2}$$
$$\eta(R,\gamma_0)<\frac{\gamma_0}{2}.$$
Our next goal is to show (76). Indeed, since $a$ is independent of $x$, we will take $E(R,\gamma)$ as
$$E(R, \gamma)=\{p\in \re^N||p|\leq CR^2\gamma\}\times\{q\in \re^N\otimes\re^n||q|\leq CR\gamma\}\times\{r\in\mathrm{Sym}(n)\otimes\re^n||r|\leq C\gamma\}.$$
We notice that as set,
$$\lim_{\gamma\to 0}E(R,\gamma)=\{0\}.$$
This is important for what follows in proving (76). Let $\sigma$ be one of component variables of $\{p, q, r\}$. We have by condition (9)
\begin{eqnarray}
\partial_\sigma a^i(p,q,r)&=&\int_0^1\frac{d}{dt}a^i(tp,tq,tr)dt\nonumber\\
&=&\int_0^1\sum \partial_{p_j}\partial_\sigma a^i(\cdot)p_j+\sum\partial_{q_k^l}\partial_\sigma a^i(\cdot)q_k^l+\sum\partial_{r_{kl}^j}\partial_\sigma a^i(\cdot)r_{kl}^j dt.
\end{eqnarray}
Since $|p_j|\leq CR^2\gamma, |q_k^j|\leq CR\gamma$, and $|r_{kl}^j|\leq C\gamma$, we have from (77)
$$|\partial_\sigma a^i|_{E(R,\gamma)}\leq C\|a\|_{C^2(E(R,\gamma))}\gamma,$$
which implies, by definitions (30),(32),(34),
\begin{eqnarray}
A[R,\gamma],B[R,\gamma],C[R,\gamma]\leq C\|a\|_{C^2(E(R,\gamma))}\gamma.
\end{eqnarray}
Here, of course, $C^2(E(R,\gamma))$ denotes the maximum norm of $\nabla^2 a$ on the set $E(R,\gamma)$.
On the other hand, we have
\begin{eqnarray}
&&\partial_\sigma a^i(p,q,r)-\partial_\sigma a^i(p',q',r')\nonumber\\
&=&\int_0^1\frac{d}{dt}a^i(tp+(1-t)p',tq+(1-t)q',tr+(1-t)r')dt\nonumber\\
&=&\int_0^1\sum \partial_{p_j}\partial_\sigma a^i(\cdot)(p_j-p_j')+\sum\partial_{q_k^l}\partial_\sigma a^i(\cdot)(q_k^l-{q'}_k^j)+\sum\partial_{r_{kl}^j}\partial_\sigma a^i(\cdot)(r_{kl}^j -{r'}_{kl}^j)dt.
\end{eqnarray}
From (79), we have, by definition (36), (37), (38),
\begin{eqnarray}
H_\alpha^A[R,\gamma],H_\alpha^B[R,\gamma],H_\alpha^C[R,\gamma]\leq C\|a\|_{C^2(E(R,\gamma))}\gamma^{1-\alpha}.
\end{eqnarray}
Similarly, from (79), we have
\begin{eqnarray}
H_1^A[R,\gamma],H_1^B[R,\gamma],H_1^C[R,\gamma]\leq C\|a\|_{C^2(E(R,\gamma))}.
\end{eqnarray}
Finally, we substitute (78), (80), and (81) into (50),(51), (52) and (49), we see that $\delta(R,\gamma)$ is a function in $\gamma$ and $\gamma^{1-\alpha}$, which proves (76). The rest of proof is similar and we omit it.

\subsection{Proof of Corollary 1.4}
It follows from Theorem 1.3 immediately since $C^{1+\alpha}$ is only needed due to the remark in the proof of Theorem 1.1.

\section{Proof of Theorem A, B, C}
\subsection{Constant coefficients}

We first point out that the results are easy consequences of Poisson type if the coefficients of $L$ are constant using a linear transformation ([GT]).
Indeed, let $\mathbf{P}$ be a constant matrix which defines a nonsingular linear transformation $y=x\mathbf{P}$ from $\re^n$ to $\re^n$. Letting
$\widetilde{u}(y)=u(x)=u(y\mathbf{P}^{-1})$, one can verify that
$$\sum a^{ij} D_{ij}u(x)=\sum \widetilde{a}^{ij} D_{ij} \widetilde{u}(y)$$
where $\mathbf{A}=[a^{ij}]$ and $\widetilde{\mathbf{A}}=\mathbf{P}^t\mathbf{A}\mathbf{P}$.
By ellipticity, we can choose $\mathbf{P}$ so that $\widetilde{\mathbf{A}}$ is identity, Then Theorem 1.1-1.3 apply to the constant coefficient case.
\subsection{Proof of Thereom A}
In order to prove this result, we need to extend Theorem 1.2 as follows
\begin{thm} Let $a(x, p, q)=(a_1(x, p, q), ..., a_N(x, p, q))$ and $b(x)=(b_1(x),...,b_N(x))$ be of class $C_{loc}^{k+\alpha}$ ($1\leq k\leq\infty, 0<\alpha<1)$,  where $x\in\re^n, p\in\re^N$, and $q\in \re^n\otimes\re^N$. Assume $b(0)=0$.
Then, for any given $c_0\in \re^N, c_1\in \re^n\otimes\re^N $, the following system: $u(x)=(u_1(x), ..., u_N(x)): \{|x|\leq R\}\to \re^N$,
\begin{eqnarray}
\Delta u(x)&=&a(x, u(x), \nabla u(x))+b(x)\cdot\nabla^2 u(x)\nonumber\\
u(0)&=&c_0\nonumber\\
\nabla u(0)&=&c_1\nonumber
\end{eqnarray}
has infinitely many solutions of $C^{k+2+\alpha}(\{|x|\leq R\})$ for sufficiently small values of $R$. In particular, all hese solutions are not radially symmetric.
\end{thm}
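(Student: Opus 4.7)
The plan is to reduce the statement to the framework already developed in Section~4 and the argument of Theorem~1.2, treating the semilinear term $b(x)\cdot\nabla^2 u$ as a perturbation that is small because $b(0)=0$. First I would eliminate the initial data: set $\tilde u(x)=u(x)-c_0-c_1\cdot x$, so that $\tilde u(0)=0$, $\nabla\tilde u(0)=0$, and the system becomes
\begin{equation*}
\Delta\tilde u(x)=\tilde a(x,\tilde u,\nabla\tilde u)+b(x)\cdot\nabla^2\tilde u,
\end{equation*}
where $\tilde a(x,p,q):=a(x,p+c_0+c_1\cdot x,q+c_1)$ is of class $C^{k+\alpha}_{loc}$. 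Collecting everything into the fully nonlinear right-hand side
\begin{equation*}
F(x,p,q,r):=\tilde a(x,p,q)+b(x)\cdot r,
\end{equation*}
the problem is now to produce fixed points of the operator $\mathcal{T}(f)=h+\Theta(f)$ from Section~4 on the ball $\mathbf{A}(R,\gamma)\subset\mathbf{B}(R)$, with $h$ an appropriate harmonic polynomial of degree two.

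Next I would compute the constants in (30)--(38) for $F$. The crucial point is the $r$-dependence: $\nabla_r F=b(x)$ depends only on $x$, and $\nabla_r^2 F\equiv 0$. Therefore
\begin{equation*}
H_1^C[R,\gamma]=0,\qquad C[R,\gamma]\le \sup_{|x|\le R}|b(x)|,\qquad H_\alpha^C[R,\gamma]\le H_\alpha[b]_{|x|\le R}.
\end{equation*}
Because $b(0)=0$ and $b\in C^{k+\alpha}$, we have $\sup_{|x|\le R}|b(x)|=O(R)$, so that the quantity $\delta_C(R,\gamma)$ from (52) is $O(R)+O(R^\alpha)\to 0$ as $R\to 0$, uniformly for $\gamma$ in any bounded range. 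The $\tilde a$-contributions to $\delta_A,\delta_B,\delta_D$ are bounded exactly as in the proof of Theorem~1.2 (where only $C^{1+\alpha}$ regularity is needed, since the $r$-derivatives $H_1^C$ that required $C^2$ have been handled separately and vanish here).

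Plugging these into (49) and (61), I would obtain
\begin{equation*}
\delta(R,\gamma)=\varepsilon_1(R,\gamma),\qquad \eta(R,\gamma)=C|F(0,0,0,0)|+\varepsilon_2(R,\gamma),
\end{equation*}
where $\varepsilon_i(R,\gamma)\to 0$ as $R\to 0$ for each fixed $\gamma$, and $F(0,\dots)=a(0,c_0,c_1)$ is a fixed constant. I then follow the proof of Theorem~1.2 verbatim: choose $\gamma_0$ so large that $C|a(0,c_0,c_1)|<\gamma_0/4$, and then take $R$ so small that $\varepsilon_1(R,\gamma_0)<1/2$ and $\varepsilon_2(R,\gamma_0)<\gamma_0/4$. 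Theorem~4.4 then yields $\|\Theta(f)-\Theta(g)\|^{(2)}\le\tfrac12\|f-g\|^{(2)}$ and $\|\Theta(f)\|^{(2)}\le\gamma_0/2$ on $\mathbf{A}(R,\gamma_0)$. With $h$ any harmonic homogeneous polynomial of degree $2$ satisfying $\|h\|^{(2)}<\gamma_0/2$, the map $\mathcal{T}=h+\Theta$ is a contraction of $\mathbf{A}(R,\gamma_0)$ into itself, so by the Banach fixed-point theorem it has a unique fixed point $\tilde u$ there; setting $u=\tilde u+c_0+c_1\cdot x$ gives the desired solution. Letting $h$ vary over the infinite-dimensional family of harmonic polynomials of degree two produces infinitely many distinct solutions (they have different Hessians at the origin by (23)), and Lemma~5.1 shows none of them is radial. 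Finally, $C^{k+2+\alpha}$ regularity follows from elliptic bootstrapping applied to $\Delta u=a(x,u,\nabla u)+b(x)\cdot\nabla^2 u$ once $u\in C^{2+\alpha}$ is known.

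The main technical step, and the only place where something genuinely new must be checked, is the quantitative verification that the $b(x)\cdot r$ perturbation contributes $O(R^\alpha)$ rather than an $O(1)$ piece to the contraction constant $\delta(R,\gamma)$. This is precisely where the hypothesis $b(0)=0$ is used; without it, $C[R,\gamma]$ would remain bounded away from zero as $R\to 0$, preventing the contraction estimate. Everything else is a routine repackaging of the Section~4 estimates together with the substitution trick already used for Theorem~1.2.
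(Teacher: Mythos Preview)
Your proposal is correct and follows essentially the same route as the paper: absorb $b(x)\cdot\nabla^2 u$ into a fully nonlinear right-hand side $\tilde a(x,p,q,r)=a(x,p,q)+b(x)\cdot r$, observe that linearity in $r$ forces $H_1^C=0$ while $b(0)=0$ makes $C[R,\gamma]$ and $(2R)^\alpha H_\alpha^C[R,\gamma]$ vanish with $R$, and then rerun the Theorem~1.2 argument (including the shift $u\mapsto u-c_0-c_1\cdot x$). The paper records the slightly sharper bounds $C[R,\gamma]=O(R)$ and $H_\alpha^C[R,\gamma]=O(R^{1-\alpha})$ coming from the Lipschitz continuity of $b$, but your weaker $O(R^\alpha)$ estimate for $\delta_C$ is already sufficient.
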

\begin{proof}
Consider the new sytem $\Delta u=\tilde{a}(x, u, \nabla u, \nabla^2 u)$, where 
$$\tilde{a}(x, u, \nabla u, \nabla^2 u)=a(x, u, \nabla u)+b(x)\cdot\nabla^2 u(x).$$
We note that $\tilde{a}(x, p, q, r)$ is linear in $r$ and $b(0)=0, \tilde{a}(0)=a(0)$. So the argument of Theorem 1.2 can be easily modified to give a proof. For example, we can easily check that $H_\alpha ^C[R,\gamma]=O(R^{1-\alpha}), H_1^C[R,\gamma]=0, $ and $ C[R,\gamma]=O(R^\alpha \gamma)$. We omit the rest of details.
\end{proof}

Now we are ready to give a proof of Theorem A. Let us consider the following
$$\sum_{ij} a^{ij}(0)D_{ij} u(x)=a(x, u, \nabla u)+\sum_{ij} (a^{ij}(0)-a^{ij}(x))\cdot D_{ij} u(x).$$
This is equivalent to $L u=a(x, u, \nabla u)$ and so Theorem 6.1 for constant coefficient elliptic operator applies. This completes the proof.

By the same reasoning, Theorem B, C can be similarly proved.
\subsection{Proof of Theorem D}
Let us first recall the basic definition of harmonic maps. Assume that $M, N$ have dimenions $m,n$ repsectively. If we use local coordinates, the metric tensor of $M$ can be written as 
$$(\gamma_{\alpha\beta})_{\alpha,\beta=1,...m,}$$
and the one of $N$ as 
$$(g_{ij})_{i,j=1,...n.}$$
We shall also use the following notations
$$(\gamma^{\alpha\beta}_{\alpha,\beta=1,...m}=(\gamma_{\alpha\beta})^{-1}_{\alpha,\beta=1,...m,}$$
$$\gamma :=\det(\gamma_{\alpha\beta}),$$
$$\Gamma_{\beta\eta}^\alpha=\frac{1}{2}\gamma^{\alpha\delta}(\gamma_{\beta\delta,\eta}+\gamma_{\eta\delta,\beta}-\gamma_{\beta\eta, \delta})$$
and similarly 
$$g^{ij}, \Gamma^i_{jk}.$$
If $fM\to N$ is a map of $c^1$ and is said to be harmonic if it satisfies, in local coordinates $x=(x^1, ..., x^m) \in M$
$$ \frac{1}{\sqrt{\gamma}}\frac{\partial}{\partial x^\alpha}(\sqrt{\gamma}\gamma^{\alpha\beta}\frac{\partial}{\partial x^\beta}f^i)+\gamma^{\alpha\beta}(x)\Gamma^i_{jk}(f(x))\frac{\partial}{\partial x^\alpha}f^j\frac{\partial}{\partial x^\beta}f^k=0.$$

Theorem A with initial values implies the existence of a local harmonic map with given tangent 
plane at $q$.
\normalsize

\bigskip

School of Mathemtics and Informatics,

Jiangxi Normal University, Nanchang, China

\bigskip

Department of Mathematical Sciences

Indiana University - Purdue University Fort Wayne

Fort Wayne, IN 46805-1499, USA.

pan@ipfw.edu
\end{document}